\newcommand{\sinc}{\,\mathrm{sinc\,}}
\theoremstyle{definition}
\newtheorem{definition}{Definition}
\newtheorem{remark}[definition]{Remark}
\theoremstyle{plain}
\newtheorem{theorem}[definition]{Theorem}
\newtheorem{lemma}[definition]{Lemma}
\newtheorem{prop}[definition]{Proposition}
\newcommand{\sign}{\text{sign}}
\newcommand{\dd}{\text{d}}
\newcommand{\kappab}{\kappa_0}
\newcommand{\added}[1]{#1}
\newcommand{\deleted}[1]{}
\title{Conditional Stability of the Euler Method on Riemannian Manifolds}
\author{Marta Ghirardelli}
\email{marta.ghirardelli@ntnu.no}
\author{Brynjulf Owren$^{*}$}
\thanks{$^{*}$Corresponding author}
\email{brynjulf.owren@ntnu.no}
\author{Elena Celledoni}
\email{elena.celledoni@ntnu.no}
\address{Department of Mathematical Sciences, NTNU, N-7491 Trondheim, Norway}
\date{December, 2025}
\keywords{Stability, Riemannian manifolds, Numerical integrators, Euler method}
\subjclass[2000]{65L05, 65L20, 37M15}
\begin{document}


\begin{abstract}
We derive nonlinear stability results for numerical integrators on Riemannian manifolds, by imposing conditions on the ODE vector field and the step size that makes the numerical solution non-expansive whenever the exact solution is non-expansive over the same time step.
Our model case is a geodesic version of the explicit Euler method. Precise bounds are obtained in the case of Riemannian manifolds of constant sectional curvature. The approach is based on a cocoercivity property of the vector field adapted to manifolds from Euclidean space. It allows us to compare the new results to the corresponding well-known results in flat spaces, and in general we find that a non-zero curvature will deteriorate the stability region of the geodesic Euler method. The step size bounds depend on the distance traveled over a step from the initial point. Numerical examples for spheres and hyperbolic 2-space confirm that the bounds are tight.

\end{abstract}
\maketitle

\section{Introduction}

In this paper, we propose a framework for analyzing the stability of numerical integrators on Riemannian manifolds and present results for a geodesic version of the explicit Euler method. This work is a continuation of the article \cite{arnold24bso}, in which the concept of B-stability for numerical methods in Euclidean spaces was extended to Riemannian manifolds.


\paragraph{\bf Remarks on the history of stability in Euclidean space.}
The stability of numerical methods applied to linear systems is the most basic notion of stability. When considering constant coefficient, diagonalizable matrices, one can argue, via a change of variables, that it suffices to apply the method to the scalar differential equation $y'=\lambda y$, where $\lambda$ is a complex parameter.
For most numerical methods, there exists an associated stability region $\Omega\subset\mathbb{C}$, ensuring stability whenever $h\lambda\in\Omega$, where $h$ is the time step used by the numerical method.
A method is called unconditionally stable or A-stable if $\Omega\supset\mathbb{C}^-$. 
This form of stability is widely known, for an overview see for instance \cite{dekker1984,hairer96sod2,iserles91os}. 

A more general form of stability analysis applicable to nonlinear problems was developed in the mid-1970s. This theory is related to {\em norm contractivity}, which states that a method is stable if it does not increase the distance between two initial values over a time step, provided the exact solutions do not expand in the same norm.
More precisely, for two initial values $y_0$ and $z_0$ in a normed linear space, one has for the ODE vector field $X$
$$
 \|\exp(hX)y_0 - \exp(hX)z_0\| \leq \|y_0-z_0\|\quad\Rightarrow\quad \|y_1-z_1\| \leq \|y_0-z_0\|.
$$
If the chosen norm is an inner product norm and this condition holds for all positive stepsizes, $h$, the (one-step) method is said to be B-stable \cite{butcher1975, burrage1979}.
For multi-step methods, the norm contractivity must be considered in an extended space and gives rise to the notion of G-stability for one-leg methods, see
\cite{dahlquist1975} and the earlier works \cite{liniger1956, dahlquist1963}.
A crucial aspect of this theory is the monotonicity of the vector field $X$:
\begin{equation} \label{mono_euc}
\langle X|_y - X|_z, y-z\rangle \leq -\nu \|y-z\|^2,\ \nu\in\mathbb{R},
\end{equation}
which implies that $$\|\exp(tX)y-\exp(tX)z\| \leq e^{-\nu t} \|y-z\|.$$ Thus, the flow of $X$ is guaranteed to be non-expansive if the monotonicity condition holds with $\nu\geq 0$.

The monotonicity condition \eqref{mono_euc} works well in the case that $X$ is allowed to be arbitrarily stiff, but is less useful for the non-stiff situation where explicit schemes may be preferable. Dahlquist and Jeltsch \cite{dahlquist1979gdo} proposed to replace the monotonicity condition by the {\it cocoercivity condition}
\begin{equation} \label{coco_euc}
    \langle X|_y - X|_z, y-z\rangle \leq -\alpha \|X|_y-X|_z\|^2.
\end{equation}
Clearly, when $\alpha=\nu=0$ the conditions \eqref{mono_euc} and \eqref{coco_euc} are the same,
but if \eqref{coco_euc} holds with  $\alpha>0$ then $X$ is Lipschitz with
$\text{Lip}(X) \leq \frac{1}{\alpha}$ and non-expansive.  Dahlquist and Jeltsch invented the notion of $r$-circle contractive Runge--Kutta methods and were able to give a bound on the stepsize in terms of $\alpha$ for which such a method is non-expansive.

\paragraph{\bf Numerical integrators on manifolds.}
There are many different ways of devising numerical integrators for manifolds. It is impossible to describe vector fields and integrators without assuming some structure, and method classes often differ depending on which type of structure is imposed. For instance, for differential algebraic equations, the manifold comes with an ambient space and a set of constraints. Distances are usually measured in the metric of the ambient space. Matrix Lie groups are modeled as subgroups of the general linear group. Crouch and Grossman \cite{crouch93nio} were among the first to propose numerical integrators that are intrinsically defined on manifolds. In their case, the added structure was a frame of vector fields in terms of which the ODE vector field could be expressed, and the methods were constructed by taking compositions of flows of constant linear combinations of the frame. Munthe-Kaas \cite{munthe-kaas99hor} considered homogeneous spaces where a central object is the transitive action of a Lie group on the manifold. Supplied with such an action together with the exponential map, one can represent the vector field locally on the Lie algebra of the group and also use the group action to evolve the numerical approximation on the manifold. A common framework that can be used to study these integrators geometrically and algebraically is through the use of a certain flat affine connection with non-zero torsion  \cite{munthe-kaas08oth, lundervold15oas}. For a practical introduction to Lie group integrators in general, see e.g. \cite{christiansen2011topics, celledoni14ait, owren2018lie}.

\paragraph{\bf Related work.} In \cite{simpson-porco14cto}, Simpson--Porco and Bullo provide several results in the contraction theory for Riemannian manifolds, with applications to dynamical systems which are of interest in optimization and control theory. There, they introduce what we refer to as the monotonicity condition on Riemannian manifolds. Bullo et al. discuss contractive processes on Riemannian manifolds in \cite{bullo21fct}. Of particular interest to our work is Theorem 6, adapted from \cite{wang10maa}, which gives sufficient conditions on the stepsize for the Euler method to converge to an equilibrium point for a vector field which is Lipschitz and satisfies a monotonicity condition. 

Except from \cite{arnold24bso} we have found little about stability of numerical methods for solving ordinary differential equations on Riemannian manifolds. But in a paper by Kunzinger et al. \cite{kunzinger06gge} some Gronwall type estimates were derived for flows of vector fields, and these results were later used in a numerical setting by
Curry and Schmeding \cite{curry20col} and Celledoni et al. \cite{celledoni20epm} to derive convergence results and global error estimates.
As for conditionally non-expansive integrators in Euclidean space, there have been some recent developments building on \cite{dahlquist1979gdo}.
Sanz-Serna and Zygalakis \cite{sanz-serna20cof} investigated the non-expansive behaviour of Runge--Kutta methods applied to convex gradient systems. They found an instance of an explicit Runge--Kutta method which is expansive for any fixed positive stepsize when applied to a carefully constructed gradient field of a convex potential. Also, Sherry et al. \cite{sherry24dsn} applied the theory in \cite{dahlquist1979gdo} to develop a ResNet type neural network architecture with guaranteed 1-Lipschitz behaviour, see also \cite{celledoni23dsb}.

\paragraph{\bf Our contributions.}
We believe that this is the first paper to address the conditional stability of numerical methods on Riemannian manifolds using only intrinsic measures such as the Riemannian distance function. Our approach rests on the adaptation of {\em cocoercivity} to Riemannian manifolds, inspired by the paper \cite{dahlquist1979gdo} where conditional stability of Runge--Kutta methods in Euclidean spaces is analysed based on a cocoercivity condition. A geodesic version of the explicit Euler method is analysed for the case that the manifold has constant sectional curvature, and we believe that the obtained bounds on the stepsize for the Euler map to be non-expansive are new.

In Section~\ref{sec:background} we introduce the tools and notation we are using, and provide some preliminary results which are needed later.
The stepsize bounds for conditional stability of the Euler method in the constant curvature case are presented and proved in Section~\ref{sec:conditional}. The main results are summarized in Theorem~\ref{theo: GEE non-expansive for positive rho} for manifolds of constant positive sectional curvatures, and Theorem~\ref{theo: GEE non-expansive for negative rho} gives the corresponding result for negatively curved (hyperbolic) spaces.
In Section~\ref{sec:examples} we present three concrete toy problems on Riemannian manifolds, one for each of the positively curved $S^2$ and $S^3$, and one
on the hyperbolic half plane $\mathbb{H}^2$, a case of constant negative sectional curvature.

\section{Background and preliminaries} 
\label{sec:background}

The notation and terminology we use is mostly adopted from Lee \cite{lee18itr}, and we refer the reader to this and other text books such as \cite{docarmo92rg,jost17rga} for details. A Riemannian manifold $(\mathcal{M}, g)$ is a smooth manifold $\mathcal{M}$ equipped with a smoothly varying inner product $g$ on each tangent space $T_p \mathcal{M}$, $p\in\mathcal{M}$. We write $g(\cdot, \cdot)=\langle \cdot, \cdot \rangle$, and $\|\cdot\|=\sqrt{g(\cdot,\cdot)}$ whenever convenient. \added{We also assume $\mathcal{M}$ is connected throughout the paper.}
The length of a curve $\gamma:[a,b]\subseteq\mathbb{R} \to \mathcal{M}$ is denoted by
\begin{equation*}
    L(\gamma) = \int_a^b \| \Dot{\gamma}(t) \| \, \text{d}t,
\end{equation*}
and for the Riemannian (geodesic) distance between points $p$ and $q$ on $\mathcal{M}$ induced by the metric, we write $d(p,q)$.
%
A subset $\mathcal{U}\subseteq\mathcal{M}$ is geodesically convex if, for any pair of points $p,q\in\mathcal{U}$, there exists a unique minimizing geodesic segment from $p$ to $q$ contained in $\mathcal{U}$.  We refer to $\mathfrak{X}(\mathcal{M})$ as the set of all smooth vector fields on $\mathcal{M}$. We reserve the symbol $\nabla$ for the Levi-Civita connection associated with $(\mathcal{M}, g)$, and write $\nabla_X Y$ for the covariant derivative of $Y$ along $X$. The connection defines the covariant derivative of a vector field $V\in\mathfrak{X}(\mathcal{\gamma})$ along a curve $\gamma(t)$, namely $D_t V(t)=\nabla_{\dot{\gamma}(t)}\tilde{V}$, where $\tilde{V}$ is an extension of $V$ to a neighborhood of $\gamma(t)$.  
Whenever $\gamma(t)$, $t\in [0,t^*]$ for some $t^*>0$, is the solution of the second order differential equation $D_t \Dot{\gamma}(t)=0$, it is called a geodesic. When initial conditions $\gamma(0)=p\in\mathcal{M}$ and $\Dot{\gamma}(0)=v_p \in T_p\mathcal{M}$ are imposed, we obtain a unique local solution $\gamma(t)$, 
which defines the map $\exp_p : T_p \mathcal{M}\to \mathcal{M}$ as $v_p\mapsto \exp_p(v_p)=\gamma(1)$. We similarly write $\exp(tX)$ (without the subscript on '$\exp$') for the $t$-flow of a vector field $X\in\mathfrak{X}(\mathcal{M})$, that is the diffeomorphism $p \mapsto y(t) = \exp(tX) \, p$ with $y(0)=p\in\mathcal{M}$, $\Dot{y}=X|_y$. 

A natural way of defining numerical integrators on Riemannian manifolds is through the use of geodesics, compositions of geodesics and parallel transport. We refer to such methods as geodesic integrators. They are all designed to approximate the flow map $\exp(tX)$.
The simplest of them is a geodesic version of the explicit Euler method, which we refer to as the Geodesic Explicit Euler (GEE) method. It is given by
\begin{equation}
    \label{eq: GEE}
    \tag{GEE}
    y_{n+1} = \exp_{y_n}(h X|_{y_{n}}).
\end{equation}
A similar definition for the Geodesic Implicit Euler (GIE) method reads
\begin{equation}
    \label{eq: GIE}
    \tag{GIE}
    y_{n} = \exp_{y_{n+1}}(-h X|_{y_{n+1}}).
\end{equation}
Both of the schemes (\ref{eq: GEE}) and (\ref{eq: GIE}) have convergence order one, and they reduce to the classical explicit and implicit Euler when the manifold is the Euclidean space \cite{arnold24bso}.
As far as we know, there is no unique natural way of generalizing the GEE and GIE methods to obtain higher order schemes, for instance in the spirit of Runge-Kutta methods. Clearly, this is possible, e.g. by constructing a retraction map based on normal coordinates \cite{celledoni02aco}, but the resulting methods can be somewhat complicated. Simple, naive method formats may, on the other hand, lead to order barriers.
Alternative numerical integrators on manifolds are available in the literature. Crouch and Grossman built integrators by composing the flows of ``frozen" vector fields \cite{crouch93nio}. These were further developed by Celledoni et al. into commutator-free schemes \cite{celledoni03cfl, curry19vss}. Munthe-Kaas showed that any classical Runge-Kutta method can be turned into a method of the same order on a general homogeneous manifold \cite{munthe-kaas98rkm}. The resulting integrators are known as Runge-Kutta Munthe-Kaas (RKMK) methods. Many variants of these methods can be found in the literature, see for instance the surveys
\cite{iserles2000, celledoni14ait, christiansen2011topics, owren2018lie, celledoni21lgi}.

A map $\varphi:\mathcal{M}\to\mathcal{M}$ is non-expansive on $\mathcal{U} \subseteq \mathcal{M}$ 
if
\begin{equation*}
    d\big( \varphi(x_0), \varphi(y_0) \big) \le d(x_0, y_0)
\end{equation*}
for any $x_0, y_0 \in \mathcal{U}$. Choosing $\varphi$ to be the $t$-flow of a vector field, or its numerical approximation, one can get non-expansive flows and non-expansive numerical schemes. We denote by $\nabla X$ the linear operator ((1,1) tensor field) acting fiberwise on $T\mathcal{M}$; $\left.\nabla X\right|_p : v_p \mapsto \nabla_{v_p} X $. The flow of a vector field $X$ is non-expansive
if $X$ satisfies the monotonicity condition
\begin{equation}
    \label{eq:monotonicity-condition}
    \langle \nabla_{v_p} X, v_p \rangle \le -\nu \, \| v_p  \|^2,\ \forall \, p\in\mathcal{U}\ \text{and}\ v_p\in T_p \mathcal{M}
\end{equation}
for some $\nu\geq 0$ \cite{arnold24bso, simpson-porco14cto}. The constant $\nu$ can then be chosen from 
\begin{equation*}
    -\nu = \sup_{p\in\mathcal{U}} \mu_g (\nabla X|_p),
\end{equation*}
where $\mu_g$ is the logarithmic $g$-norm of $\nabla X|_p$, which for a linear operator $A:T_p\mathcal{M}\to T_p\mathcal{M}$ can be defined as \cite{dekker1984, söderlind2024logarithmic}
\begin{equation}
    \label{eq: logarithmic g-norm}
    \mu_g(A) = \sup_{0\ne v\in T_p\mathcal{M}} \frac{g(Av, v)}{g(v,v)}.
\end{equation}
Inspired by \cite{dahlquist1979gdo},
we shall replace \eqref{eq:monotonicity-condition} by a {\em cocoercivity condition}:
\begin{definition}\label{def:cocoercivity}
A linear operator $A: V\rightarrow V$ on the Hilbert space $V$ is {\em $\alpha$-cocoercive}
if there exists $\alpha>0$ 
such that
\begin{equation}
    \label{eq:cocoercivity-condition}
    \langle A v, v \rangle \le -\alpha \, \| Av \|^2\quad \forall \, v\in V.
\end{equation}
A $(1,1)$-tensor field $\mathcal{A}$ defined on $\mathcal{U}\subset \mathcal{M}$, where $\mathcal{M}$ is a Riemannian manifold, is called $\alpha$-cocoercive on $\mathcal{U}$ if $\mathcal{A}|_p : T_p\mathcal{M}\rightarrow T_p\mathcal{M}$ is $\alpha$-cocoercive for every $p\in\mathcal{U}$.
For convenience, we say that the vector field $X$ is $\alpha$-cocoercive on $\mathcal{U}$ whenever $\nabla X$ is $\alpha$-cocoercive on $\mathcal{U}$.
\end{definition}
See \cite[Chapter~4.2]{bauschke2017correction} for an in-depth treatment of cocoercive operators.
\footnote{Definition~\ref{def:cocoercivity} is adapted to our use from \cite{bauschke2017correction}, where the operator $A$ is allowed to be nonlinear, and
we apply their definition to the operator $-A$. Sometimes we also refer to \eqref{eq:cocoercivity-condition} for the case when it only holds with $\alpha\leq 0$.
}
\begin{remark}
    \label{remark: norm of nabla X}
    Let $A=\left.\nabla X\right|_p$ in Definition~\ref{def:cocoercivity}. Note that \eqref{eq:monotonicity-condition} with $\nu=0$ is equivalent to \eqref{eq:cocoercivity-condition} with $\alpha=0$.
    If (\ref{eq:cocoercivity-condition}) holds with $\alpha\geq 0$, then \eqref{eq:monotonicity-condition} is satisfied with $\nu=0$.
    Also, when $X$ is $\alpha$-cocoercive on $\mathcal{U}$ (i.e. $\alpha>0$) we obtain by 
    the Cauchy-Schwarz inequality that
    \begin{equation*}
        \alpha \| \nabla_{v_p} X \|^2 \le - \langle \nabla_{v_p} X , v_p \rangle \le \| \nabla_{v_p} X \| \, \| v_p \|,
    \end{equation*}
    leading to the bounds $\| \nabla_{v_p} X \| \le \frac{1}{\alpha} \| v_p \|$, and $\|\nabla X \| \le \frac{1}{\alpha}$ where $\| \cdot \|$ is the operator norm.
\end{remark}

\begin{prop}
	\label{prop: 2nd condition implies nonexpansivity}
    Let $(\mathcal{M}, g)$ be a \added{connected} Riemannian manifold, $\mathcal{U}\subset\mathcal{M}$ an open, geodesically convex set, and let $X\in\mathfrak{X}(\mathcal{M})$ be $\alpha$-cocoercive on $\mathcal{U}$ for some $\alpha > 0$. Then, for any $x_0, y_0\in\mathcal{U}$ there exists $t^*>0$ such that
    $$d(\exp(tX) x_0, \exp(tX) y_0) \leq d(x_0, y_0)$$ for $0<t\le t^*$.
\end{prop}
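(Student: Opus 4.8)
The plan is to realize the flowed distance as the length of a transported curve and to show that this length cannot increase in $t$. First I would join $x_0$ and $y_0$ by the minimizing geodesic $\gamma_0:[0,1]\to\mathcal{U}$, which exists and is unique because $\mathcal{U}$ is open and geodesically convex, and which satisfies $L(\gamma_0)=d(x_0,y_0)$. Since $\gamma_0([0,1])$ is compact and contained in the open set $\mathcal{U}$, and the flow of $X$ exists for short times and depends continuously on its initial point, there is a $t^*>0$ such that
\[
\Gamma(s,t) := \exp(tX)\,\gamma_0(s)
\]
is well defined and smooth on $[0,1]\times[0,t^*]$ with $\Gamma(s,t)\in\mathcal{U}$ throughout. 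For each fixed $t$ the curve $s\mapsto\Gamma(s,t)$ joins $\exp(tX)x_0$ to $\exp(tX)y_0$, so its length $\ell(t):=L(\Gamma(\cdot,t))$ bounds the distance from above, $d(\exp(tX)x_0,\exp(tX)y_0)\le\ell(t)$.

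The heart of the argument is to control the variation field $V(s,t)=\partial_s\Gamma$. Because $t\mapsto\Gamma(s,t)$ is an integral curve of $X$, we have $\partial_t\Gamma=X|_{\Gamma}$, and the symmetry (torsion-freeness) of the Levi-Civita connection gives
\[
D_t V = D_t\partial_s\Gamma = D_s\partial_t\Gamma = D_s(X\circ\Gamma) = \nabla_V X .
\]
Consequently, wherever $\Gamma(s,t)\in\mathcal{U}$,
\[
\tfrac{1}{2}\,\partial_t\|V\|^2 = \langle D_t V, V\rangle = \langle\nabla_V X, V\rangle \le -\alpha\,\|\nabla_V X\|^2 \le 0 ,
\]
the inequality being the cocoercivity of $X$; here I only need the weak consequence $\nu=0$ recorded in Remark~\ref{remark: norm of nabla X}. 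Hence $\|V(s,t)\|$ is non-increasing in $t$ for each fixed $s$, so $\|V(s,t)\|\le\|V(s,0)\|$ on $[0,t^*]$.

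Integrating this pointwise bound in $s$ yields
\[
\ell(t) = \int_0^1\|V(s,t)\|\,\mathrm{d}s \le \int_0^1\|V(s,0)\|\,\mathrm{d}s = L(\gamma_0) = d(x_0,y_0) ,
\]
and combining with the earlier estimate gives $d(\exp(tX)x_0,\exp(tX)y_0)\le d(x_0,y_0)$ for $0<t\le t^*$, as claimed.

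I expect the main obstacle to be a matter of domains rather than the differential inequality itself: one must guarantee that the \emph{whole} variation $\Gamma(\cdot,t)$, and not merely its two endpoints, stays inside $\mathcal{U}$, since cocoercivity, and hence the sign of $\partial_t\|V\|^2$, is only assumed there. This is exactly why the conclusion is local in time and produces a $t^*$ depending on $x_0,y_0$; the compactness of $\gamma_0([0,1])$ together with continuity of the flow is what I would use to extract it. A secondary technical point is that the Riemannian norm is not smooth at the zero vector, so I would run the estimate for the smooth quantity $\|V\|^2$ and only afterwards take square roots, thereby avoiding any differentiation of the length functional itself.
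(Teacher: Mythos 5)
Your argument is correct. The paper's own proof is a two-line reduction: it notes (via Remark~\ref{remark: norm of nabla X}) that $\alpha$-cocoercivity with $\alpha>0$ forces the monotonicity condition \eqref{eq:monotonicity-condition} to hold with $\nu=0$, and then simply cites Theorem~2.2 of \cite{arnold24bso} for the resulting contraction estimate $d(\exp(tX)x_0,\exp(tX)y_0)\le e^{-\nu t}\,d(x_0,y_0)$. What you have written is, in effect, a self-contained proof of that cited theorem in the case $\nu=0$: the variation $\Gamma(s,t)=\exp(tX)\gamma_0(s)$ along the minimizing geodesic, the symmetry identity $D_tV=\nabla_VX$, the differential inequality $\tfrac12\partial_t\|V\|^2=\langle\nabla_VX,V\rangle\le-\alpha\|\nabla_VX\|^2\le 0$, and the length-versus-distance comparison. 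The key inequality is precisely the same reduction to $\nu=0$ monotonicity that the paper makes, so the two proofs coincide conceptually; yours trades the external citation for a complete argument. Your handling of the two genuine technical points --- keeping the entire transported curve inside $\mathcal{U}$ via compactness of $\gamma_0([0,1])$ and openness of the flow domain (which is exactly where the local $t^*$ comes from), and differentiating $\|V\|^2$ rather than $\|V\|$ --- is what makes the local-in-time statement rigorous. As a bonus, your mechanism of bounding the length of a transported curve by controlling the variation field is the same one the paper later isolates in Lemma~\ref{lemma4} for the discrete Euler map, so your proof also makes the continuous/discrete analogy in the paper more transparent.
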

\begin{proof}
From  \cite[Theorem~2.2]{arnold24bso}, we see that if \eqref{eq:monotonicity-condition} holds for some $\nu\in\mathbb{R}$, then 
$$
d(\exp(tX) x_0, \exp(tX) y_0) \leq e^{-\nu t} d(x_0, y_0),\ x_0, y_0\in\mathcal{U},\ 0<t\le t^*,
$$
for some $t^*>0$. The result follows because if (\ref{eq:cocoercivity-condition}) holds for $A=\left.\nabla X\right|_{p}$ with $\alpha > 0$, then \eqref{eq:monotonicity-condition} is satisfied with $\nu=0$ (Remark~\ref{remark: norm of nabla X}). 
\end{proof}
\bigskip


We now prove a lemma that provides a sufficient condition for the GEE method to be non-expansive.
\added{
\begin{lemma} \label{lemma4}
    Consider $x_0,y_0 \in \mathcal{M}$ and let $y: [s_1,s_2]\rightarrow \mathcal{M}$ be any admissible curve such that $y(s_1)=x_0,\ y(s_2)=y_0$. Suppose $\phi:\mathcal{M} \to \mathcal{M}$ is a smooth map, and denote $z(s) = \phi(y(s))$. If the tangents $z'(s)$ and $y'(s)$ satisfy
    \begin{equation}
        \label{nrmdiff00}
        \| z'(s) \| - \| y'(s) \| \le 0, \quad \forall \, s \in [s_1,s_2]
    \end{equation}
    then
    $$
    L(z(\cdot{})) \leq L(y(\cdot)).
    $$ 
    Moreover, if $y(s)$ is a length-minimizing geodesic, then
    $$
    d(x_1,y_1) \leq d(x_0,y_0),
    $$
    where $x_1=\phi(x_0)$, $y_1 =  \phi(y_0)$.
\end{lemma}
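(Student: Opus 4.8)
The plan is to reduce both conclusions to the definition of the length functional and to the characterization of the Riemannian distance $d(p,q)$ as the infimum of the lengths of admissible curves joining $p$ and $q$. The first statement is then a direct consequence of integrating the pointwise hypothesis \eqref{nrmdiff00}, and the second follows by combining the first with the fact that a length-minimizing geodesic realizes the distance.

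First I would verify that $z(\cdot)$ is itself an admissible curve, so that its length is well defined and it may be used as a competitor in the infimum defining $d$. Since $\phi$ is smooth and $y$ is admissible, the composition $z = \phi \circ y$ is piecewise smooth, and by the chain rule its velocity is $z'(s) = \mathrm{d}\phi_{y(s)}\big(y'(s)\big)$, which is exactly the tangent vector appearing in \eqref{nrmdiff00}. Integrating the hypothesis over $[s_1,s_2]$ and using monotonicity of the integral then gives
$$
L(z(\cdot)) = \int_{s_1}^{s_2} \|z'(s)\|\,\mathrm{d}s \le \int_{s_1}^{s_2}\|y'(s)\|\,\mathrm{d}s = L(y(\cdot)),
$$
which is the first claim. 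Here both norms are taken in the metric $g$ at the respective footpoints $z(s)$ and $y(s)$, so there is no inconsistency in comparing the integrands pointwise.

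For the second statement, I would observe that $z$ joins $z(s_1) = \phi(x_0) = x_1$ to $z(s_2) = \phi(y_0) = y_1$, hence it is an admissible competitor for the distance between $x_1$ and $y_1$; consequently $d(x_1,y_1) \le L(z(\cdot))$. On the other hand, when $y$ is a length-minimizing geodesic from $x_0$ to $y_0$ we have $L(y(\cdot)) = d(x_0,y_0)$ by minimality. Chaining these estimates with the first claim yields
$$
d(x_1,y_1) \le L(z(\cdot)) \le L(y(\cdot)) = d(x_0,y_0),
$$
as desired.

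There is no serious obstacle here: the argument is essentially the observation that a smooth map which shrinks velocities pointwise along one fixed curve cannot increase that curve's length, combined with the variational characterization of distance. The only point requiring (minor) care is the well-definedness of $L(z(\cdot))$ and the admissibility of $z$ as a competitor curve, which is guaranteed by smoothness of $\phi$. In particular we do not need $z$ to be regular or length-minimizing, only piecewise smooth, since $d(x_1,y_1)$ is an infimum over all such curves and $z$ is merely one of them.
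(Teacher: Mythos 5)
Your proof is correct and follows essentially the same route as the paper: integrate the pointwise inequality \eqref{nrmdiff00} to compare lengths, then chain $d(x_1,y_1)\le L(z(\cdot))\le L(y(\cdot))=d(x_0,y_0)$ using the variational characterization of distance. The only difference is that you explicitly verify the admissibility of $z=\phi\circ y$ via the chain rule, a detail the paper leaves implicit.
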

\begin{proof}
    The first part of the Lemma follows from the definition of the length of a curve. We have
    $$
    L(z(\cdot{})) - L(y(\cdot)) = \int_{s_1}^{s_2} \| z'(s) \| \dd s - \int_{s_1}^{s_2} \| y'(s) \| \dd s = \int_{s_1}^{s_2} \|  z'(s)\| - \| y'(s) \|\, \dd s \le 0.
    $$
    Assume now that $y(s)$ is also a length-minimizing geodesic for which \eqref{nrmdiff00} holds. Then
    $$
    d(x_1,y_1) \leq L(z(\cdot{})) \le L(y(\cdot)) =  d(x_0,y_0).
    $$
\end{proof}
Let $y: [s_1,s_2]\rightarrow \mathcal{M}$ be a length-minimizing geodesic connecting two points $x_0$ and $y_0$. We assume that we can define a smooth variation through geodesics
\begin{equation}
    \label{eq: variation}
    \Gamma(s,t)=\exp_{y(s)}(t h X|_{y(s)}),\ s\in [s_1,s_2],\ t\in[0,1].
\end{equation}
Then $\Gamma(s,1)=\exp_{y(s)}(hX|_{y(s)})$ is precisely a step with GEE with initial values $y(s)$. With $S^s(t) = \partial_s \Gamma(s,t)$, condition \eqref{nrmdiff00} takes the form
\begin{equation} \label{nrmdiff0}
    \|S^s(1)\|-\|S^s(0)\|\leq 0,\ \forall \, s\in [s_1,s_2].
\end{equation}
}

The vector field $S^s(t)$ \deleted{appearing in Lemma~\ref{lemma4}} is a  {\it Jacobi field} along the \added{geodesic $t\mapsto \Gamma(s,t)$ for every fixed $s$. 
This is a crucial observation in what follows.}
It is well-known that such Jacobi fields
satisfy a linear second order differential equation called the Jacobi equation, it reads
\begin{equation} \label{jacobiequation}
	D_t^2 S^s + R(S^s, T^s) T^s = 0,
\end{equation}
where $T^s(t)=\partial_t\Gamma(t,s)$ and $R$ is the (Riemann) curvature tensor, $$R(X,Y)Z=\nabla_X\nabla_YZ-\nabla_Y\nabla_X Z -\nabla_{[X,Y]}Z.$$
\added{The curvature tensor $R$ vanishes identically when the manifold is Euclidean space, and the general solution to the Jacobi equation is
$S^s(t)=S^s(0) + t \dot{S}^s(0)$}.
The Jacobi equation must be supplied with initial values $S^s(0)$ and $D_tS^s(0)$ to determine a unique solution. Given $\Gamma$ as in \eqref{eq: variation},
\begin{align}
    \label{eq: IC from Gamma}
    \begin{split}
        S^s(0) &= (\partial_s y) (0) = S_0, \\
        D_t S^s (0) &= (D_s T^s) (0) = h \, D_s X |_{s=0} = h \nabla_{S_0} X,
    \end{split}
\end{align}
where we have used the symmetry of the Levi-Civita connection $\nabla$. 

The sectional curvature  of a two-dimensional subspace of the tangent space $T_p \mathcal{M}$ at $p\in\mathcal{M}$ is defined as 
$$K(u,v)=\frac{\langle R(u,v)v,u \rangle}{\langle u,u\rangle \langle v,v\rangle- \langle u,v\rangle^2},\quad u,v\in T_p\mathcal{M},$$
for $u,v$ linearly independent. It depends only on $\text{span}\{u,v\}$ and the point $p\in\mathcal{M}$.
If $K$ is equal to a constant $\rho$ 
 for all two dimensional subspaces $\text{span}\{u,v\}\subset T_p\mathcal{M}$ $\forall p\in \mathcal{M}$, then $\mathcal{M}$ has constant sectional curvature $\rho$. Examples  
are the Euclidean space $(\rho=0)$, the $n$-sphere of radius $r$ $(\rho=1/r^2)$, the $n$-hyperbolic space of radius $r$ $(\rho=-1/r^2)$, and any 
Riemannian manifold isometric to one of them.  

In Section~\ref{sec:conditional}, $\mathcal{M}$ will assume constant sectional curvature $\rho$ and the curvature tensor $R$ in this case takes the form
\begin{equation*}
    \label{eq: R for constant rho}
    R(v, w) x = \rho \, (\langle w, x \rangle v - \langle v, x \rangle w)
\end{equation*}
for any $v, w, x \in T_p \mathcal{M}$, $p \in \mathcal{M}$ \cite[Proposition~8.36]{lee18itr}.

\section{Conditional stability of the Geodesic Explicit Euler method in the constant curvature case.}
\label{sec:conditional}

The Jacobi equation \eqref{jacobiequation} has closed form solutions in the case that the manifold $\mathcal{M}$ has constant sectional curvature, for details, see \cite[Chapter~6.5]{jost17rga}. 
For the geodesic $\gamma(t)$, let $\{e_1(t),\ldots,e_d(t)\}$ be a parallel orthonormal frame along $\gamma(t)$ where $\dot{\gamma}(t)=\|\dot{\gamma}(t)\| e_1(t)$,
and suppose that $\rho$ is the sectional curvature of $\mathcal{M}$.
Define $\kappa=\sqrt{|\rho|} \|\dot{\gamma}(0)\|$.

Then any Jacobi field $J(t)$ is of the form
\begin{equation}
    \label{eq: Jacobi fields formula}
    J(t) = (a_1 + b_1 t) \, e_1(t) + \sum_{i=2}^d \left(a_i c_{\kappa }(t) + b_i s_{\kappa}(t) \right) e_i(t),
\end{equation}
where
\begin{equation}
\label{CkSk}
    c_{\kappa}(t) = \begin{cases}
        \cos(\kappa t) & \text{if } \rho >0, \\
        1 & \text{if } \rho =0, \\
        \cosh(\kappa t) & \text{if } \rho <0,
    \end{cases} \qquad
     s_{\kappa}(t) = \begin{cases}
        \frac{\sin(\kappa t)}{\kappa} & \text{if } \rho >0, \\
        t & \text{if } \rho =0, \\
        \frac{\sinh(\kappa t)}{\kappa} & \text{if } \rho <0,
    \end{cases}
\end{equation}
see e.g. \cite[Chapter~6.5]{jost17rga}. The coefficients $a_i \in \mathbb{R}$, $b_i \in \mathbb{R}$, $i=1,\dots,d$ are uniquely determined once the initial conditions $J(0)$ and $D_t J(0)$ are given \cite[Proposition~10.2]{lee18itr}. Differentiating (\ref{eq: Jacobi fields formula}) with respect to $t$ gives
\begin{equation*}
    D_t J(t) = b_1 \, e_1(t) + \sum_{i=2}^d \left(a_i \dot{c}_{\kappa }(t) + b_i \dot{s}_{\kappa}(t) \right) e_i(t),
\end{equation*}
and since $c_{\kappa}(0) = \dot{s}_{\kappa}(0) = 1$ and $\dot{c}_{\kappa}(0) = s_{\kappa}(0) = 0$, we obtain
\begin{equation}
    \label{eq: IC from formula}
    J(0) =   \sum_{i=1}^d a_i \, e_i(0), \qquad D_t J(0) =   \sum_{i=1}^d b_i \, e_i(0).
\end{equation}
Therefore, we also have
\begin{equation}
    \label{eq: coefficients expression}
    a_i = \langle J(0), e_i(0) \rangle, \qquad b_i = \langle D_tJ(0), e_i(0) \rangle. 
\end{equation}

In what follows, we will make use of the following non-negative functions:
\begin{eqnarray}
\label{f1}
f_1(\kappa)&:=&\sign(\rho)(1- c_{\kappa}^2(1)),\\ \label{f2}f_2(\kappa)&:=&\sign(\rho)( 1-c_{\kappa}(1)s_{\kappa}(1)), \\ \label{f3}f_3(\kappa)&:=&\sign(\rho) ( 1-s_{\kappa}^2(1)).
\end{eqnarray}
These functions are identically zero in the case of zero curvature.

The following lemma is useful for analyzing condition \added{\eqref{nrmdiff0}} \deleted{of Lemma~\ref{lemma4}} in the constant curvature case.
\begin{lemma}
\label{lemma1}
Let $(\mathcal{M},g)$ be a \added{complete, connected} $d$-Riemannian manifold with constant sectional curvature $\rho$, and let $p$ be a point in $\mathcal{M}$.
Suppose $u_p\in T_p\mathcal{M}$ and  $\gamma(t)=\exp_p(t u_p)$ with $0\leq t\leq 1$.
For the Jacobi field $J(t)$ along $\gamma(t)$ satisfying $J(0)=v_p$, $D_tJ(0)=w_p$ we have
\begin{align}
 \label{nrmdiff}
 \begin{split}
   \|J(1)\|^2 - \|J(0)\|^2 &=  \| w_p \|^2 + 2 \langle v_p, w_p \rangle \\ 
   &- \sign({\rho})\sum_{i=2}^d \big( a_i^2 f_1(\kappa) +2 a_i b_i f_2(\kappa) + b^2_i f_3(\kappa) \big),   
\end{split}
\end{align} 
where $a_i=\langle v_p, e_i\rangle$, $b_i=\langle w_p, e_i\rangle$, $\{e_1,\ldots,e_d\}$ is any orthonormal frame at $p$ such that
$u_p=\|u_p\| e_1$,  $f_i(\kappa)$ are given in \eqref{f1}--\eqref{f3} and \eqref{CkSk}, and $\kappa=\sqrt{|\rho|}\|u_p\|$.
\end{lemma}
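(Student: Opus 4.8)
The plan is to compute $\|J(1)\|^2 - \|J(0)\|^2$ directly from the closed-form expression \eqref{eq: Jacobi fields formula} for the Jacobi field in the constant curvature case. Since $\{e_1(t),\dots,e_d(t)\}$ is a \emph{parallel orthonormal} frame along $\gamma$, orthonormality is preserved for every $t$, and I can evaluate the squared norm of $J(t)$ simply by summing the squares of its coefficients in this frame. First I would write down the coefficients $a_i$, $b_i$ using \eqref{eq: coefficients expression}, noting that the hypotheses $J(0)=v_p$, $D_tJ(0)=w_p$ give $a_i=\langle v_p,e_i\rangle$ and $b_i=\langle w_p,e_i\rangle$ as claimed, with the frame chosen so that $u_p=\|u_p\|e_1$ (so that $e_1$ aligns with $\dot\gamma(0)$, matching the setup preceding the lemma where $\kappa=\sqrt{|\rho|}\|u_p\|$).

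Next I would evaluate the two squared norms. At $t=0$ we simply have $\|J(0)\|^2 = \sum_{i=1}^d a_i^2$. At $t=1$, using \eqref{eq: Jacobi fields formula} and orthonormality,
\begin{equation*}
    \|J(1)\|^2 = (a_1+b_1)^2 + \sum_{i=2}^d \bigl( a_i c_\kappa(1) + b_i s_\kappa(1) \bigr)^2,
\end{equation*}
where the first-coordinate ($i=1$) part behaves exactly as in the Euclidean case because the frame direction $e_1$ is the geodesic direction along which curvature does not act. Expanding the difference and separating the $i=1$ term from the rest gives
\begin{equation*}
    \|J(1)\|^2 - \|J(0)\|^2 = (2a_1b_1 + b_1^2) + \sum_{i=2}^d \Bigl( a_i^2(c_\kappa^2(1)-1) + 2a_ib_i\, c_\kappa(1)s_\kappa(1) + b_i^2 s_\kappa^2(1) \Bigr).
\end{equation*}

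The final step is to reorganize this into the stated form. I would recognize that $b_1^2 + 2a_1b_1 = \sum_{i=1}^d b_i^2 + 2\sum_{i=1}^d a_ib_i - \sum_{i=2}^d(b_i^2 + 2a_ib_i)$, i.e. that the full inner-product and squared-norm sums $\sum b_i^2 = \|w_p\|^2$ and $\sum a_ib_i = \langle v_p,w_p\rangle$ can be assembled by adding and subtracting the $i\ge 2$ contributions. Combining the subtracted $i\ge 2$ terms with the curvature-dependent $i\ge 2$ sum, each coefficient collects as $a_i^2(c_\kappa^2(1)-1)$, $2a_ib_i(c_\kappa(1)s_\kappa(1)-1)$, and $b_i^2(s_\kappa^2(1)-1)$; multiplying through by $\sign(\rho)$ and invoking the definitions \eqref{f1}--\eqref{f3} of $f_1,f_2,f_3$ yields exactly \eqref{nrmdiff}. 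The only genuinely delicate point is the bookkeeping in this regrouping — correctly accounting for which index ranges the $\|w_p\|^2$ and $\langle v_p,w_p\rangle$ terms absorb — but this is routine algebra rather than a conceptual obstacle; completeness of $\mathcal{M}$ is needed only to guarantee that $\gamma(t)=\exp_p(tu_p)$ and hence $J(t)$ are defined on all of $[0,1]$.
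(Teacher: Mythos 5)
Your proposal is correct and follows essentially the same route as the paper's proof: evaluate $\|J(1)\|^2$ and $\|J(0)\|^2$ from the closed-form Jacobi field expression \eqref{eq: Jacobi fields formula} in the parallel orthonormal frame, identify $a_i,b_i$ via \eqref{eq: coefficients expression}, and regroup by adding and subtracting the $i\ge 2$ contributions so that $\|w_p\|^2+2\langle v_p,w_p\rangle$ and the $f_1,f_2,f_3$ terms emerge. The bookkeeping you describe matches the paper's computation line for line.
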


\begin{proof}
A straightforward calculation, using the formula \eqref{eq: Jacobi fields formula} for the Jacobi field and the stated initial conditions yields
\begin{align*}
    \|&J(1)\|^2 - \|J(0)\|^2 = (a_1 + b_1)^2 + \sum_{i=2}^d (a_i c_{\kappa}(1)+b_i s_{\kappa}(1))^2 - \sum_{i=1}^d a_i^2 \\
    &= \sum_{i=1}^d b_i^2 + 2 a_i b_i + \sum_{i=2}^d \big( a_i^2 (c^2_{\kappa}(1) - 1)  +2 a_i b_i  (c_{\kappa}(1) s_{\kappa}(1)-1) + b^2_i  (s^2_{\kappa}(1)-1) \big) \\   
    &= \| w_p \|^2 + 2  \langle v_p, w_p \rangle - \sign({\rho})\sum_{i=2}^d \big( a_i^2 f_1(\kappa) +2 a_i b_i f_2(\kappa) + b^2_i f_3(\kappa) \big),
\end{align*}
where we have also used the definitions \eqref{f1}--\eqref{f3}.
\end{proof}
\added{We note that $(a_1+b_1t)e_1(t)$ in \eqref{eq: Jacobi fields formula} are  solutions to the Jacobi equation for any manifold. 
These are tangent to the geodesic $\gamma(t)$. The solutions in the orthogonal complement to $e_1(t)$ depend on the manifold and in particular on its curvature.
We shall find it useful to split any
tangent vector $v_p$ as $v_p=P_{X}v_p + (I-P_X)v_p$ where $P_X:T_{p} \mathcal{M} \to \text{span}(X|_{p})$  is the orthogonal projector for any $p\in\mathcal{M}$.
}

An important task is to define a suitable subset of the manifold where the conditions on the vector field is to be imposed.
For simplicity, we have here decided to impose the bounds over a geodesically convex subset, $\mathcal{U}\subset\mathcal{M}$. 
This condition can be slightly relaxed.

We are now ready to state the main result for conditional stability in the case of constant positive sectional curvature.

\begin{theorem}
\label{theo: GEE non-expansive for positive rho}
    Let $(\mathcal{M},g)$ be a \added{complete, connected} $d$-Riemannian manifold with constant sectional curvature $\rho>0$ and $\mathcal{U}\subset\mathcal{M}$ a geodesically convex subset.   
    Let $X\in\mathfrak{X}(\mathcal{U)}$ be an $\alpha$-cocoercive vector field on $\mathcal{U}$ for some $\alpha > 0$, and
    $\|X\|\leq C$ when restricted to $\mathcal{U}$.
    Suppose there exists a constant $\mu_+$ such that
    \begin{equation}
        \label{eq: 3rd monotonicty conditon positive rho}
        \langle \nabla_{v_p} X, (I - P_X) v_p \rangle \ge - \mu_+ \| \nabla_{v_p} X \|^2, \quad \forall \, p\in\mathcal{U}, \quad \forall \, v_p \in T_p \mathcal{M}.
    \end{equation}
 Then, for any choice of initial points $x_0, y_0\in\mathcal{U}$ the Geodesic Euler Method 
 satisfies
    \begin{equation} \label{toprove_pos_curvature}
        d \big( \exp_{x_0}(hX|_{x_0}), \exp_{y_0}(hX|_{y_0}) \big) \le d(x_0, y_0)
    \end{equation}
    whenever the step size $h$ is chosen such that
    \begin{equation}
    \label{eq: condition on h for positive rho}
   0 < h \le 2\alpha - 2\mu_+ \left( f_2(\kappab) - \sqrt{f_1(\kappab) f_3(\kappab)} \right),
    \end{equation}
    where $\kappab = h C \sqrt{\rho}$, and $f_1$, $f_2$, $f_3$ as in \eqref{f1}-\eqref{f3}.
\end{theorem}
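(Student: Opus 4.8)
The plan is to reduce non-expansiveness of the GEE step to a pointwise inequality for a Jacobi field, and then to estimate that inequality using Lemma~\ref{lemma1}, cocoercivity, and the hypothesis \eqref{eq: 3rd monotonicty conditon positive rho}. First I would take a length-minimizing geodesic $y:[s_1,s_2]\to\mathcal{M}$ from $x_0$ to $y_0$ (which exists since $\mathcal{M}$ is complete and connected) and form the variation $\Gamma(s,t)=\exp_{y(s)}(thX|_{y(s)})$, so that $z(s)=\Gamma(s,1)$ is the GEE step applied along $y$. For each fixed $s$ the field $S^s(t)=\partial_s\Gamma(s,t)$ is a Jacobi field along the geodesic $t\mapsto\Gamma(s,t)$, with initial data $S^s(0)=y'(s)=:S_0$ and $D_tS^s(0)=h\nabla_{S_0}X$ given by \eqref{eq: IC from Gamma}. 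By Lemma~\ref{lemma4} it then suffices to prove, for every $s$, that $\|S^s(1)\|\le\|S^s(0)\|$, equivalently $\|S^s(1)\|^2-\|S^s(0)\|^2\le0$.

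Next I would apply Lemma~\ref{lemma1} with $u_p=hX|_{y(s)}$, $v_p=S_0$, $w_p=h\nabla_{S_0}X$, and $\kappa=h\sqrt{\rho}\,\|X|_{y(s)}\|$. Choosing the orthonormal frame so that $e_1$ is aligned with $X|_{y(s)}$ identifies $\sum_{i\ge2}(\cdot)e_i$ with the component $(I-P_X)(\cdot)$. Writing $N:=\nabla_{S_0}X$, formula \eqref{nrmdiff} becomes
\begin{equation*}
\|S^s(1)\|^2-\|S^s(0)\|^2=h^2\|N\|^2+2h\langle S_0,N\rangle-f_1\|(I-P_X)S_0\|^2-2f_2\,h\langle (I-P_X)S_0,N\rangle-f_3\,h^2\|(I-P_X)N\|^2=:Q,
\end{equation*}
where $f_i=f_i(\kappa)$. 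The task is to show $Q\le0$.

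For the estimate I would first use cocoercivity, $2h\langle S_0,N\rangle\le-2h\alpha\|N\|^2$, and then treat the cross term through the splitting $f_2=\sqrt{f_1f_3}+(f_2-\sqrt{f_1f_3})$. The $\sqrt{f_1f_3}$ part combines with the two squared terms: applying Cauchy--Schwarz to $\langle (I-P_X)S_0,N\rangle=\langle (I-P_X)S_0,(I-P_X)N\rangle$ gives
\begin{equation*}
-f_1\|(I-P_X)S_0\|^2-2\sqrt{f_1f_3}\,h\langle (I-P_X)S_0,N\rangle-f_3\,h^2\|(I-P_X)N\|^2\le-\left(\sqrt{f_1}\,\|(I-P_X)S_0\|-\sqrt{f_3}\,h\|(I-P_X)N\|\right)^2\le0,
\end{equation*}
so this block may be discarded. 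For the leftover $-2(f_2-\sqrt{f_1f_3})h\langle (I-P_X)S_0,N\rangle$ I would invoke \eqref{eq: 3rd monotonicty conditon positive rho} with $v_p=S_0$, namely $\langle (I-P_X)S_0,N\rangle\ge-\mu_+\|N\|^2$, to bound it by $2(f_2-\sqrt{f_1f_3})h\mu_+\|N\|^2$. Collecting terms gives
\begin{equation*}
Q\le h\|N\|^2\left(h-2\alpha+2\mu_+\big(f_2-\sqrt{f_1f_3}\big)\right),
\end{equation*}
which is $\le0$ precisely under a bound of the form \eqref{eq: condition on h for positive rho}.

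Two points need care, and I expect them to be the main obstacles. First, all of the above relies on $f_1,f_2,f_3\ge0$ and on $f_2\ge\sqrt{f_1f_3}$ (so that the leftover coefficient is nonnegative and the $\mu_+$-inequality points the right way); these I would verify directly from the closed forms \eqref{CkSk}, checking e.g. the small-argument behaviour $f_2-\sqrt{f_1f_3}\sim(\tfrac23-\tfrac1{\sqrt3})\kappa^2\ge0$. Second, the bound on $Q$ involves $f_i(\kappa)$ with $\kappa=h\sqrt{\rho}\,\|X|_{y(s)}\|$ depending on $s$, whereas \eqref{eq: condition on h for positive rho} is phrased with $\kappa_0=hC\sqrt{\rho}$. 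Since $\|X\|\le C$ yields $\kappa\le\kappa_0$, I would close the argument (taking $\mu_+\ge0$, which is harmless since the hypothesis is monotone in $\mu_+$) by showing that $\kappa\mapsto f_2(\kappa)-\sqrt{f_1(\kappa)f_3(\kappa)}$ is non-decreasing on $[0,\kappa_0]$, so that its value at $\kappa$ is dominated by its value at $\kappa_0$. The degenerate case $X|_{y(s)}=0$ (where $\kappa=0$, all $f_i$ vanish and $P_X$ is immaterial) reduces to $Q\le h(h-2\alpha)\|N\|^2\le0$ and is handled separately. Establishing this monotonicity and the inequality $f_2\ge\sqrt{f_1f_3}$ on the full range of interest is, I expect, the most delicate part of the proof.
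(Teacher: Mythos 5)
Your proposal is correct and follows essentially the same route as the paper's proof: reduce to $\|S^s(1)\|^2-\|S^s(0)\|^2\le 0$ via Lemma~\ref{lemma4}, expand with Lemma~\ref{lemma1}, bound the tangential part by $\alpha$-cocoercivity, absorb the $\sqrt{f_1 f_3}$ portion of the cross term into a nonpositive square (the paper does this termwise in the frame via the Young-type inequality $-(pa^2+qb^2)\le 2\sqrt{pq}\,ab$, which is the same computation as your vectorial Cauchy--Schwarz/completed square), and control the remainder with \eqref{eq: 3rd monotonicty conditon positive rho}. The points you flag as delicate --- $f_2\ge\sqrt{f_1 f_3}$, monotonicity of $f_2-\sqrt{f_1 f_3}$ on the relevant range, and $\mu_+\ge 0$ --- are exactly the ones the paper disposes of by assertion (and by the remark following the theorem, which shows $\mu_+\ge\alpha>0$), so your treatment is, if anything, slightly more careful.
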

\begin{figure}
    \centering
    \includegraphics[width=0.95\linewidth]{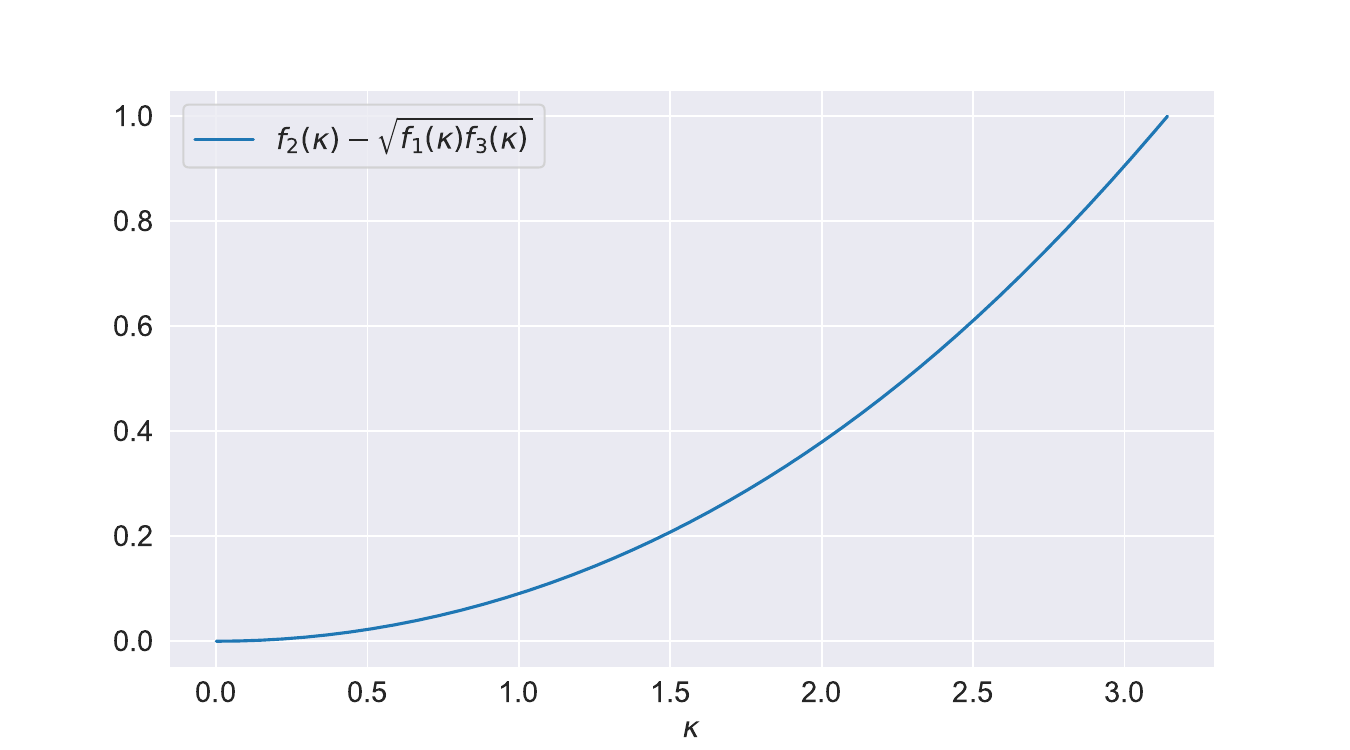}
    \caption{\small \deleted{Visualization} \added{The graph} of the function appearing in Theorem \ref{theo: GEE non-expansive for positive rho}, $f_2(\kappa) - \sqrt{f_1(\kappa) f_3(\kappa)} = (1-\cos(\kappa)\sinc(\kappa)) - | \sin(\kappa) | \sqrt{1-\sinc^2(\kappa)}$.}
    \label{fig: f_plot}
\end{figure}
\begin{proof}
Take any $x_0, y_0\in\mathcal{U}$ and let $y(s),\, s\in[s_1,s_2]$ be the geodesic satisfying $y(s_1)=x_0$ and $y(s_2)=y_0$.
Since $\mathcal{M}$ is \deleted{compact and therefore} geodesically complete, the variation $\Gamma(s,t)=\exp_{y(s)}(thX|_{y(s)})$ is well defined and 
we set $S^s(t)=\partial_s \Gamma(s,t)\in\mathfrak{X}(\Gamma)$. 
\added{We must show that \eqref{toprove_pos_curvature} is satisfied under the stated assumptions.
For this, we will use the condition \eqref{nrmdiff0} resulting from Lemma~\ref{lemma4} and then specialize to the constant curvature case by applying Lemma~\ref{lemma1}. Next we invoke the $\alpha$-cocoercivity condition and \eqref{eq: 3rd monotonicty conditon positive rho}.
This yields an inequality which depends on the stepsize $h$, and from there we derive the bound \eqref{eq: condition on h for positive rho} on $h$ for the inequality to hold.
}
By Lemma~\ref{lemma4} it is sufficient to ensure that the condition
$\|S^s(1)\|-\|S^s(0)\|\leq 0$ is satisfied for any $s\in[s_1,s_2]$ \added{as seen from \eqref{nrmdiff0}}. We apply Lemma~\ref{lemma1} 
\added{with $J(t)=S^s(t)$, so that $J(0)=S^s(0)=:S_0$ and $D_tJ(0)=D_tS^s(0)= h\,\nabla_{S_0}X$.}
Thus, we need to check that
\begin{equation} \label{p_thm6_inequality_to_check}
 h^2 \| \nabla_{S_0} X \|^2 + 2 h \langle S_0, \nabla_{S_0} X \rangle - \sum_{i=2}^d \big( a_i^2 f_1(\kappa) +2 a_i b_i f_2(\kappa) + b^2_i f_3(\kappa) \big)
\leq 0,
\end{equation}
where $f_1$, $f_2$ and $f_3$ are as in \eqref{f1}-\eqref{f3}, $\kappa=h\sqrt{\rho}\|X|_{y(s)}\|$ and $a_i, b_i$ are given in \eqref{eq: coefficients expression}.
\added{For the first two terms in \eqref{p_thm6_inequality_to_check},} we use the $\alpha$-cocoercivity of $X$, to obtain the bound 
\begin{equation*}
     h^2 \| \nabla_{S_0} X \|^2 + 2 h \langle S_0, \nabla_{S_0} X \rangle 
    \le
    h(h-2\alpha) \, \| \nabla_{S_0} X \|^2.
\end{equation*}
\added{For each term in the remaining sum in \eqref{p_thm6_inequality_to_check}, we use a variant of
Young's inequality: for any non-negative $p$ and $q$ and real numbers $a$ and $b$, one has $-(p\, a^2+q\, b^2)\leq 2\,\sqrt{p\,q}\,ab$ and therefore}
$$
-\sum_{i=2}^d \big( a_i^2 f_1(\kappa) +2 a_i b_i f_2(\kappa) + b^2_i f_3(\kappa) \big)
\le -2\,  \big( f_2(\kappa) - \sqrt{f_1(\kappa) f_3(\kappa)}  \big) \sum_{i=2}^d a_i \, b_i.
$$
Using \eqref{eq: coefficients expression} and \eqref{eq: 3rd monotonicty conditon positive rho}
\begin{equation*}
    -\sum_{i=2}^d a_i \, b_i =
    -h\,\langle (I-P_X) S_0, \nabla_{S_0} X \rangle \le  h\,\mu_+ \| \nabla_{S_0} X \|^2.
\end{equation*}
\added{Finally we insert these three bounds into \eqref{p_thm6_inequality_to_check} to obtain}
\begin{equation*}
    \|S^s(1)\|^2 - \|S^s(0)\|^2 \le h \left(h - 2\alpha + 2\mu_+ \left( f_2(\kappa) - \sqrt{f_1(\kappa) f_3(\kappa)} \right) \right) \| \nabla_{S_0} X \|^2,
\end{equation*}
which \deleted{is non-negative} \added{holds} whenever 
\begin{equation*}
  0 <  h \le 2\alpha - 2\mu_+ \left( f_2(\kappa) - \sqrt{f_1(\kappa) f_3(\kappa)} \right).
\end{equation*}
The inequality \eqref{eq: condition on h for positive rho} follows since the function $f_2(\kappa) - \sqrt{f_1(\kappa) f_3(\kappa)}$ \deleted{in} \added{is} non-decreasing for
$0\leq\kappa\leq\pi$.
\end{proof}
\begin{figure}
    \centering   \includegraphics[width=0.5\linewidth]{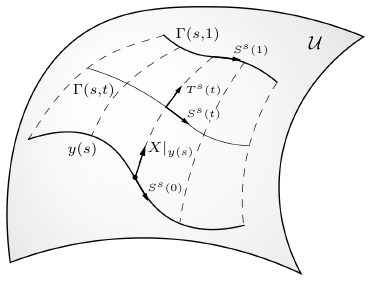}
    \caption{\small Construction for the proof of theorem \ref{theo: GEE non-expansive for positive rho}.}
    \label{fig: construction of the theorem}
\end{figure}
\begin{remark}
    When the curvature tends to zero from above, then $f_1, f_2$ and $f_3$ all tend to zero, and from \eqref{eq: condition on h for positive rho} we retrieve the bound $h\le 2\alpha$, which is the classical
    result of Dahlquist and Jeltsch \cite[Theorem~4.1]{dahlquist1979gdo} for Euclidean space.
%
%
\end{remark}
\begin{remark}
    By combining the $\alpha$-cocoercivity condition with \eqref{eq: 3rd monotonicty conditon positive rho} one finds that $\mu_+ \ge \alpha>0$. This means that the stepsize bound \eqref{eq: condition on h for positive rho} is more restrictive than the corresponding one in Euclidean space.
    $h \le \frac{2\alpha}{1-f_3(\kappa)}$.
    
\end{remark}

\begin{theorem}
	\label{theo: GEE non-expansive for negative rho}
    Let $(\mathcal{M},g)$ be a \deleted{connected} $d$-Riemannian manifold with constant sectional curvature $\rho<0$ and $\mathcal{U}\subset\mathcal{M}$ a geodesically convex subset.
    Let $X\in\mathfrak{X}(\mathcal{M)}$ be $\alpha$-cocoercive on $\mathcal{U}$ for some $\alpha > 0$, and
    $\|X\|\leq C$ when restricted to $\mathcal{U}$.
    Suppose that $\nabla X$ is invertible with a bounded inverse $\|\nabla X^{-1}\|\le \sigma$ uniformly on $\mathcal{U}$.
    Suppose there is a $\mu_-$ such that
\begin{equation}
    \label{eq: 3rd monotonicty conditon negative rho}
    \langle \nabla_{v_p} X, P_X v_p \rangle \ge - \mu_- \| \nabla_{v_p} X \|^2, \quad \forall \, p\in\mathcal{U}, \quad \forall \, v_p \in T_p \mathcal{M}.
\end{equation}
Then for any $x_0, y_0 \in \mathcal{U}$ the Geodesic Euler Method satisfies
    \begin{equation*}
        d \big( \exp_{x_0}(hX|_{x_0}), \exp_{y_0}(hX|_{y_0}) \big) \le d(x_0, y_0)
    \end{equation*}
    whenever the step size $h$ is such that
   \begin{equation}
        \label{eq: condition on h for negative rho}
        0< h\le \frac{2}{1+\sigma^2C|\rho|}\left(\alpha \,\kappa\coth\kappa -\mu_{-}\frac{f_2(\kappa)-\sqrt{f_1(\kappa)f_3(\kappa)}}{1+f_3(\kappa)}\right)
    \end{equation}
 for all $0\le \kappa\le hC\sqrt{|\rho|}$ where $\kappa=h\|X|_{p}\|\sqrt{|\rho|}$ and $p\in\mathcal{U}$, 
  and  where  
   $f_1$, $f_2$, $f_3$ are as in \eqref{f1}-\eqref{f3}. 
\end{theorem}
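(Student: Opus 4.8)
The plan is to follow the architecture of the proof of Theorem~\ref{theo: GEE non-expansive for positive rho} essentially verbatim up to the point where the curvature enters, and then to deal with the opposite sign carefully. First I would join $x_0$ and $y_0$ by the length-minimizing geodesic $y(s)$ inside $\mathcal{U}$, form the variation through geodesics $\Gamma(s,t)=\exp_{y(s)}(thX|_{y(s)})$, and set $S^s(t)=\partial_s\Gamma(s,t)$, which is a Jacobi field along $t\mapsto\Gamma(s,t)$; the variation is well defined on the convex set $\mathcal{U}$. By Lemma~\ref{lemma4}, in the form of condition \eqref{nrmdiff0}, it suffices to show $\|S^s(1)\|^2-\|S^s(0)\|^2\le 0$ for every $s$, and Lemma~\ref{lemma1} with $S_0:=S^s(0)$ and $D_tS^s(0)=h\nabla_{S_0}X$ provides the explicit expression \eqref{nrmdiff}. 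Here the decisive difference from the positive case shows up: since $\sign(\rho)=-1$, the curvature sum $\sum_{i=2}^d(a_i^2f_1+2a_ib_if_2+b_i^2f_3)$ now enters \emph{with a $+$ sign}, so the curvature works against non-expansiveness rather than for it.

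The next step is to record the hyperbolic identities satisfied by \eqref{f1}--\eqref{f3} when $\rho<0$, where $c_\kappa(1)=\cosh\kappa$ and $s_\kappa(1)=\sinh\kappa/\kappa$. I expect to use $f_1=\kappa^2(1+f_3)$, $(1+f_2)^2=(1+f_1)(1+f_3)$, and above all $\frac{1+f_2}{1+f_3}=\kappa\coth\kappa$; the last relation is what produces both the factor $\kappa\coth\kappa$ and the division by $(1+f_3)$ in \eqref{eq: condition on h for negative rho}. I would then split every vector into its $X$-parallel and $X$-orthogonal parts via $P_X$ and $I-P_X$ and invoke three bounds: the $\alpha$-cocoercivity \eqref{eq:cocoercivity-condition} applied to the full inner product $\langle S_0,\nabla_{S_0}X\rangle$; the parallel monotonicity condition \eqref{eq: 3rd monotonicty conditon negative rho} applied to $\langle\nabla_{S_0}X,P_XS_0\rangle$; and the new bounded-inverse hypothesis $\|\nabla X^{-1}\|\le\sigma$, which gives $\|(I-P_X)S_0\|^2\le\|S_0\|^2\le\sigma^2\|\nabla_{S_0}X\|^2$. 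As in the positive case, a Young-type inequality on the orthogonal cross terms yields the combination $f_2-\sqrt{f_1f_3}$.

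Collecting these, I expect the estimate to reduce to $\|S^s(1)\|^2-\|S^s(0)\|^2\le(1+f_3)\,\|\nabla_{S_0}X\|^2\,\Phi(h,\kappa)$ for a scalar function $\Phi$, so that after dividing by the positive factor $(1+f_3)$ the requirement $\Phi\le0$ rearranges exactly into \eqref{eq: condition on h for negative rho}: the cocoercivity supplies $2\alpha\kappa\coth\kappa$, the parallel monotonicity supplies the $\mu_-$-term $2\mu_-\frac{f_2-\sqrt{f_1f_3}}{1+f_3}$, and the inverse estimate, through $\kappa^2=h^2\|X\|^2|\rho|$, $f_1/(1+f_3)=\kappa^2$ and $\|X\|\le C$, produces the curvature factor appearing in the denominator $1+\sigma^2C|\rho|$. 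Finally, since $\kappa=h\|X|_p\|\sqrt{|\rho|}$ depends on the base point through $\|X|_p\|\in[0,C]$, I would require the inequality uniformly for all $0\le\kappa\le hC\sqrt{|\rho|}$, which is precisely how the statement is phrased.

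The main obstacle is exactly this reversed sign. In the positive-curvature proof the curvature sum is subtracted and may be bounded favourably; here it is added, and the orthogonal quadratic form in $(a_i,b_i)$ is indefinite (one checks $f_2^2>f_1f_3$), so it cannot be absorbed by cocoercivity alone. This is what forces the extra hypothesis $\|\nabla X^{-1}\|\le\sigma$: without controlling $\|S_0\|$ by $\|\nabla_{S_0}X\|$, the genuinely positive part of the curvature contribution cannot be dominated. The delicate part is the bookkeeping that routes cocoercivity, parallel monotonicity, and the inverse bound into precisely the three pieces $\alpha\kappa\coth\kappa$, $\mu_-\frac{f_2-\sqrt{f_1f_3}}{1+f_3}$, and $\sigma^2C|\rho|$, and then checking that the resulting step-size bound is the binding one over the whole admissible range of $\kappa$.
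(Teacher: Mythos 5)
Your proposal is correct and follows essentially the same route as the paper's proof: the same variation/Jacobi-field setup via Lemmas~\ref{lemma4} and~\ref{lemma1}, the same split of the (now positively signed) curvature sum into a full sum over $i=1,\dots,d$ (handled by cocoercivity and the bounded-inverse estimate $\|S_0\|^2\le\sigma^2\|\nabla_{S_0}X\|^2$) plus the $i=1$ component (handled by Young's inequality and the $\mu_-$ condition), and the same hyperbolic identities $f_1/(1+f_3)=\kappa^2$ and $(1+f_2)/(1+f_3)=\kappa\coth\kappa$. One small slip worth noting: the Young-type inequality is applied to the \emph{parallel} cross term $a_1b_1=h\langle P_XS_0,\nabla_{S_0}X\rangle$ rather than to the orthogonal ones as in the positive-curvature case --- which is in fact consistent with your own (correct) attribution of the $f_2-\sqrt{f_1f_3}$ contribution to the parallel monotonicity condition \eqref{eq: 3rd monotonicty conditon negative rho}.
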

\begin{figure}
    \centering
    \includegraphics[width=0.95\linewidth]{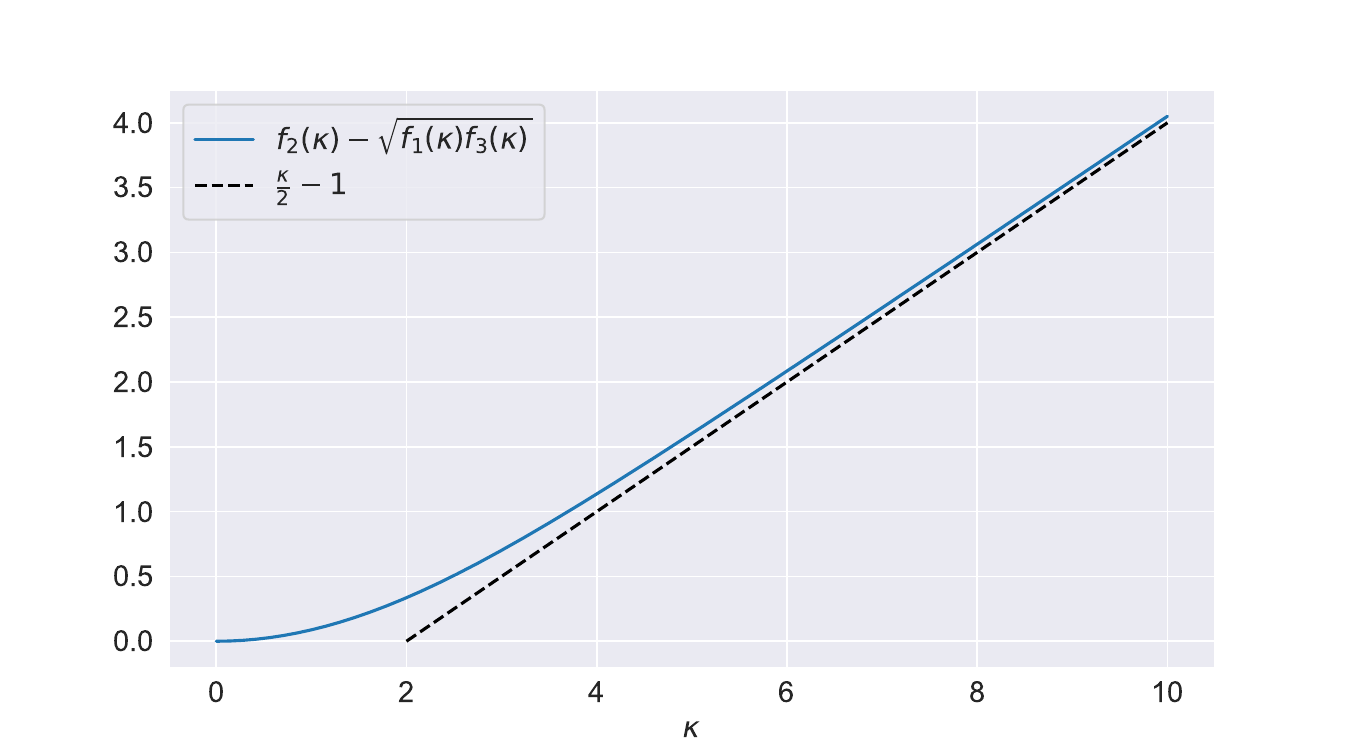}
    \caption{\small \deleted{Visualization} \added{The graph} of the function appearing in Theorem \ref{theo: GEE non-expansive for negative rho}, $f_2(\kappa) - \sqrt{f_1(\kappa) f_3(\kappa)} = \left(\cosh(\kappa)\frac{\sinh(\kappa)}{\kappa}-1\right) - \sinh\kappa\sqrt{\frac{\sinh^2(\kappa)}{\kappa^2}-1}$. For large $\kappa$ it behaves as $\frac{\kappa}{2}-1$.}
    \label{fig: f_plot negative rho}
\end{figure}
\begin{proof}
\added{We follow similar arguments as in the proof of Theorem~\ref{theo: GEE non-expansive for positive rho}, but since the expressions for the Jacobi fields are now different, we also need to modify the bounds we use for the terms in the inequalities. In particular, the sign change in the formula \eqref{nrmdiff} of Lemma~\ref{lemma1} requires the use of the projection $P_{X}$ in \eqref{eq: 3rd monotonicty conditon negative rho} instead of
$I-P_X$ that was used in \eqref{eq: 3rd monotonicty conditon positive rho}.}
We take $x_0, y_0\in\mathcal{U}$, and let $y(s),\, s\in[s_1,s_2]$ be the geodesic satisfying $y(s_1)=x_0$ and $y(s_2)=y_0$.
The variation $\Gamma(s,t)=\exp_{y(s)}(thX|_{y(s)})$ is well defined,
and we let $S^s(t)=\partial_s \Gamma(s,t)\in\mathfrak{X}(\Gamma)$. 
Now, using Lemma~\ref{lemma4}, \eqref{nrmdiff0}, Lemma~\ref{lemma1} and \eqref{nrmdiff}, we write down the following
condition that needs to be satisfied
\begin{equation} \label{p_thm9_inequality_to_check}
 h^2 \| \nabla_{S_0} X \|^2 + 2 h \langle S_0, \nabla_{S_0} X \rangle + \sum_{i=2}^d \big( a_i^2 f_1(\kappa) +2 a_i b_i f_2(\kappa) + b^2_i f_3(\kappa) \big)
\leq 0,
\end{equation}
and, we further split the sum in \eqref{p_thm9_inequality_to_check} as
\begin{align*}
\sum_{i=2}^d &\big( a_i^2 f_1(\kappa) +2 a_i b_i f_2(\kappa) + b^2_i f_3(\kappa) \big) \\
=& \sum_{i=1}^d \big( a_i^2 f_1(\kappa) +2 a_i b_i f_2(\kappa) + b^2_i f_3(\kappa) \big) - a_1^2 f_1(\kappa) -2 a_1 b_1 f_2(\kappa) - b^2_1 f_3(\kappa).
\end{align*}
where $\kappa=\kappa(s)=h\|X|_{y(s)}\| \sqrt{|\rho|}$.
Using that $\|\nabla X^{-1}\|\le \sigma$ we have
\begin{equation*}
 \sum_{i=1}^d a_i^2  =  \|S_0\|^2
\le \|\nabla X^{-1}\nabla X S_0\|^2\le \sigma^2\|\nabla_{S_0}X\|^2,
\end{equation*}
which combined with $\alpha$-cocoercivity gives us the following bound
\begin{align*}
    \sum_{i=1}^d &\big( a_i^2 f_1(\kappa) +2 a_i b_i f_2(\kappa) + b^2_i f_3(\kappa) \big) \\
    &=  f_1(\kappa)\|S_0\|^2 + 2h f_2(\kappa) \langle S_0,\nabla_{S_0}X\rangle + h^2 f_3(\kappa)\|\nabla_{S_0}X\|^2 \\
    &\le \|\nabla_{S_0}X\|^2 \left( \sigma^2 f_1(\kappa) - 2h\alpha f_2(\kappa) + h^2 f_3(\kappa) \right).
\end{align*}
A variant of Young's inequality yields
$$
- a_1^2 f_1(\kappa) - b^2_1 f_3(\kappa) \leq 2  \sqrt{f_1(\kappa)f_3(\kappa)}\, a_1b_1.
$$
Recalling that $a_1 b_1 = h \langle P_X S_0, \nabla_{S_0} X \rangle$, we make use of condition \eqref{eq: 3rd monotonicty conditon negative rho}, 
to obtain 
\begin{align*}
    - a_1^2 f_1(\kappa) -2 a_1 b_1 f_2(\kappa) - b^2_1 f_3(\kappa) & \le -2 \left(f_2(\kappa) - \sqrt{f_1(\kappa)f_3(\kappa)}  \right) a_1 b_1 \\
    &\le 2 h \mu_- \left(f_2(\kappa) - \sqrt{f_1(\kappa)f_3(\kappa)} \right)  \|\nabla_{S_0}X\|^2.
\end{align*}
Combining these bounds, we finally obtain
\begin{multline*}
    \|S^s(1)\|^2 - \|S^s(0)\|^2
    \le  \bigg( h^2 \big(1+f_3(\kappa) \big) - 2h \alpha (1+f_2(\kappa))\\ +2h \mu_- \left(f_2(\kappa) - \sqrt{f_1(\kappa) f_3(\kappa)} ) \right) 
     + \sigma^2 f_1(\kappa) \bigg) \| \nabla_{S_0} X  \|^2
\end{multline*}
which is less than or equal to zero whenever
\begin{equation*}
    h^2 \big(1+f_3(\kappa) \big) - 2h \alpha (1+f_2(\kappa))  + 2h\mu_- \left(f_2(\kappa) - \sqrt{f_1(\kappa) f_3(\kappa)} \right)  + \sigma^2 f_1(\kappa) \le 0.
\end{equation*}
Dividing throughout by $1+f_3(\kappa)\ge 1$ and by $h>0$, and observing that $\frac{f_1(\kappa)}{1+f_{3}(\kappa)}= \kappa^2= h^2\|X|_{y(s)}\|^2|\rho|$
we get
$$h\left(1+\sigma^2\|X|_{y(s)}\|^2|\rho|\right)-2\left( \alpha\,\frac{1+f_2(\kappa) }{1+f_3(\kappa)}-\mu_{-}\frac{f_2(\kappa)-\sqrt{f_1(\kappa)f_3(\kappa)}}{1+f_3(\kappa)}\right)\le 0.$$
Since $\frac{1+f_2(\kappa) }{1+f_3(\kappa)}=\kappa\coth\kappa $, we obtain the bound
$$h\le\frac{2}{1+\sigma^2C^2|\rho|}\left(\alpha\,\kappa \coth\kappa -\mu_{-} \frac{f_2(\kappa)-\sqrt{f_1(\kappa) f_3(\kappa})}{1+f_3(\kappa)}\right),$$
that needs to be satisfied for all $0\le\kappa\le hC\sqrt{|\rho|}$.
\end{proof}
\begin{remark}
\deleted{One should be aware that} The stepsize bounds of Theorems \ref{theo: GEE non-expansive for positive rho}, \ref{theo: GEE non-expansive for negative rho}  
are implicit since the parameter $\kappa$ depends on $h$.
But $\kappa$ also has an interpretation as the distance traveled by an Euler step. 
\added{When we let $h\rightarrow 0$ in \eqref{eq: condition on h for positive rho} and \eqref{eq: condition on h for negative rho} respectively
the two inequalities reduce to 
$$
2\alpha >0,\quad \frac{2}{1+\sigma^2 C |\rho|}>0,
$$
which are both trivially satisfied under the assumptions of the theorem. A continuity argument then guarantees that 
there are nonempty intervals in $h$ for which \eqref{eq: condition on h for positive rho} and \eqref{eq: condition on h for negative rho} hold.}
\end{remark}

We next use the notation
$$\mathcal{K}_p=\text{span}\{\left.X\right|_{p}\},\quad \mathcal{V}_p=\text{range}(\nabla X|_p).$$
The invertibility requirement of Theorem~\ref{theo: GEE non-expansive for negative rho} can be relaxed. In the next proposition we consider the case when $\nabla X|_p$ has a $1$-dimensional kernel $\ker{\nabla X|_p}=\mathcal{K}_p$. 
Then 
$\nabla X|_p$ is invertible on $\mathcal{V}_p$ 
 and we can guarantee the non-expansivity of the GEE method imposing a simpler bound on the step-size $h$. 
We can state the following result. 
\begin{prop}\label{prop: singular nablaX}
    Let $(\mathcal{M},g)$ be a \added{connected} $d$-Riemannian manifold with constant sectional curvature $\rho<0$ and $\mathcal{U}\subseteq \mathcal{M}$ an open geodesically convex set. Let $X\in\mathfrak{X}(\mathcal{M)}$ be an $\alpha$-cocoercive vector field on $\mathcal{U}$ for some $\alpha > 0$,  such that $\ker(\nabla X|_p)=\mathcal{K}_p,$ for all $p\in\mathcal{U}$ and $\|X\|\leq C$ when restricted to $\mathcal{U}$. 
    Suppose there exist a $\sigma > 0$ such that $\|\left.\nabla X\right|_{\mathcal{V}_p}^{-1}\|\le \sigma$ uniformly on $\mathcal{U}$. 
    Then for any $x_0, y_0 \in \mathcal{U}$ the Geodesic Euler Method 
    satisfies
    \begin{equation*}
        d \big( \exp_{x_0}(hX|_{x_0}), \exp_{y_0}(hX|_{y_0}) \big) \le d(x_0, y_0)
    \end{equation*}
    whenever the step size $h$ is such that
   \begin{equation}
   \label{bound_h_prop10}
   0\le h\le \frac{1}{\|X|_{p}\|\sqrt{|\rho|}}\operatorname{arccoth} \left  ( \frac{1+\|X|_{p}\|^2|\rho|\sigma^2}{2\alpha \|X|_{p}\|\sqrt{|\rho|}} \right)
   \end{equation}
    for all $p\in \mathcal{U}$. \deleted{and $\|X|_{p}\|\le C$.} 
\end{prop}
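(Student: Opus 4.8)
The plan is to re-run the proof of Theorem~\ref{theo: GEE non-expansive for negative rho}, changing only the one place where the global invertibility of $\nabla X$ was used. First I would fix $x_0,y_0\in\mathcal{U}$, let $y(s)$, $s\in[s_1,s_2]$, be the minimizing geodesic between them, form the variation $\Gamma(s,t)=\exp_{y(s)}(thX|_{y(s)})$ and put $S^s(t)=\partial_s\Gamma(s,t)$ with $S_0=S^s(0)$. By Lemma~\ref{lemma4}, \eqref{nrmdiff0}, Lemma~\ref{lemma1} and \eqref{nrmdiff}, non-expansivity again reduces to the inequality \eqref{p_thm9_inequality_to_check}, where $\{e_1,\dots,e_d\}$ is the parallel frame with $e_1\parallel X|_{y(s)}$, $a_i=\langle S_0,e_i\rangle$, $b_i=h\langle\nabla_{S_0}X,e_i\rangle$ and $\kappa=h\|X|_{y(s)}\|\sqrt{|\rho|}$.

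The crucial step, and the one I expect to carry the argument, is to exploit the hypothesis $\ker(\nabla X|_p)=\mathcal{K}_p=\mathrm{span}\{X|_p\}$. It gives $\nabla_{X|_p}X=0$, hence $\nabla_{e_1}X=0$. Writing an arbitrary tangent vector as $v=s\,e_1+u$ with $s\in\mathbb{R}$, cocoercivity becomes $s\langle\nabla_u X,e_1\rangle+\langle\nabla_u X,u\rangle\le-\alpha\|\nabla_u X\|^2$; the left-hand side is affine in $s$ while the right-hand side does not depend on $s$, so the coefficient of $s$ must vanish, giving $\langle\nabla_u X,e_1\rangle=0$ for every $u$. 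Thus $\mathcal{V}_p=\mathrm{range}(\nabla X|_p)$ is orthogonal to $\mathcal{K}_p$, and since $\dim\mathcal{V}_p=d-1$ we get $\mathcal{V}_p=\mathrm{span}\{e_2,\dots,e_d\}$. Two consequences follow: $b_1=h\langle\nabla_{S_0}X,e_1\rangle=0$, which is exactly why the condition \eqref{eq: 3rd monotonicty conditon negative rho} and its constant $\mu_-$ drop out here; and $\nabla_{S_0}X=\nabla_{(I-P_X)S_0}X$ with $(I-P_X)S_0\in\mathcal{V}_p$, so the restricted inverse bound yields $\sum_{i\ge2}a_i^2=\|(I-P_X)S_0\|^2\le\sigma^2\|\nabla_{S_0}X\|^2$.

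With $b_1=0$ the remaining bookkeeping is light. One has $\sum_{i\ge2}b_i^2=h^2\|\nabla_{S_0}X\|^2$ and $\sum_{i\ge2}a_ib_i=h\langle S_0,\nabla_{S_0}X\rangle$ (the missing $i=1$ term $a_1b_1$ being zero). Plugging these into \eqref{p_thm9_inequality_to_check}, using $f_1\sum_{i\ge2}a_i^2\le\sigma^2f_1\|\nabla_{S_0}X\|^2$ and then cocoercivity on the coefficient $2h(1+f_2)\langle S_0,\nabla_{S_0}X\rangle$ (valid because $1+f_2\ge0$), I would arrive at
\begin{equation*}
\|S^s(1)\|^2-\|S^s(0)\|^2\le\big(h^2(1+f_3)-2h\alpha(1+f_2)+\sigma^2 f_1\big)\|\nabla_{S_0}X\|^2,
\end{equation*}
i.e. the estimate of Theorem~\ref{theo: GEE non-expansive for negative rho} with $\mu_-=0$.

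Finally I would turn the condition $h^2(1+f_3)-2h\alpha(1+f_2)+\sigma^2f_1\le0$ into \eqref{bound_h_prop10}. Dividing by $h(1+f_3)>0$ and using $\tfrac{1+f_2}{1+f_3}=\kappa\coth\kappa$ together with $\tfrac{f_1}{1+f_3}=\kappa^2=h^2\|X|_{y(s)}\|^2|\rho|$ reduces it to $h\big(1+\sigma^2\|X|_{y(s)}\|^2|\rho|\big)\le2\alpha\,\kappa\coth\kappa$, equivalently $\coth\kappa\ge\tfrac{1+\sigma^2\|X|_{y(s)}\|^2|\rho|}{2\alpha\|X|_{y(s)}\|\sqrt{|\rho|}}$. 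Since $\coth$ is strictly decreasing on $(0,\infty)$, this is the same as $\kappa\le\operatorname{arccoth}(\cdots)$, which is \eqref{bound_h_prop10}; cocoercivity forces $\sigma\ge\alpha$, so the argument of $\operatorname{arccoth}$ is $\ge1$ and the bound is meaningful. As $\kappa$ depends on the point through $\|X|_{y(s)}\|$, the estimate must be required for every $p\in\mathcal{U}$, which is how the statement is phrased.
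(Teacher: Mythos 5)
Your proof is correct and follows essentially the same route as the paper: the same reduction via Lemma~\ref{lemma4} and Lemma~\ref{lemma1}, the same use of the restricted inverse bound and cocoercivity to reach $h^2(1+f_3)-2h\alpha(1+f_2)+\sigma^2 f_1\le 0$, and the same inversion of $\coth$. The only (welcome) difference is that you make explicit why $\mathcal{V}_p=\mathcal{K}_p^\perp$ and hence $b_1=0$, a fact the paper uses implicitly and justifies only in its appendix.
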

\begin{proof} We proceed as in the proof of Theorems~\ref{theo: GEE non-expansive for positive rho} and~\ref{theo: GEE non-expansive for negative rho} and
we write
\begin{multline*}
    \|S^s(1)\|^2 - \|S^s(0)\|^2 = h^2\| \nabla_{S_0} X\|^2+2h\langle S_0,\nabla_{S_0}X\rangle \\ +f_1(\kappa)\|(I-P_X)S_0\|^2 + 2h f_2(\kappa) \langle (I-P_X)S_0,\nabla_{S_0}X\rangle\\
    + h^2 f_3(\kappa)\|(I-P_X)\nabla_{S_0}X\|^2
\end{multline*}
    where $\kappa = \kappa(s)=h\|X|_{y(s)}\|\sqrt{|\rho|}$ and $y(s)$ is the geodesic defined on $[s_1,s_2]$ with $y(s_1)=x_0$ and $y(s_2)=y_0$.
   If $S_0\in \mathcal{K}_{y(s)}=\ker(\nabla X|_{y(s)})$,  we get
   $$ \|S^s(1)\|^2 - \|S^s(0)\|^2=f_1(\kappa)\|(I-P_X)S_0\|^2=0,\qquad \forall h >0$$
   where we have used that $P_XS_0=S_0$ and $(I-P_X)S_0=0$.
 This grants non-expansivity  for all $h>0$, (locally in $s$).
 
For $S_0\notin \mathcal{K}_{y(s)}$,  $(I-P_X)S_0\neq 0$ and
$\nabla_{S_0} X =  \nabla_{(I-P_X)S_0}X $.
We use the restriction of $\nabla X|_{y(s)}$ to $\text{range}(I-P_{X})$  
%
  and the bound $\|\left. \nabla X\right|_{\mathcal{V}_{y(s)}}^{-1}\|\le \sigma$ to obtain  
$$\|(I-P_X)S_0\|^2= \|\left. \nabla X\right|_{\mathcal{V}_{y(s)}}^{-1}\nabla_{(I-P_X)S_0} X \|^2 \le \sigma^2\|\nabla_{(I-P_X)S_0}X\|^2=\sigma^2 \|\nabla_{S_0}X\|^2.$$
Since
$$ \langle (I-P_X)S_0,\nabla_{S_0}X\rangle=\langle (I-P_X)S_0,\nabla_{(I-P_X)S_0}X\rangle\le -\alpha \|\nabla_{(I-P_X)S_0}X\|^2,$$ we get
$$ \|S^s(1)\|^2 - \|S^s(0)\|^2\le (h^2(1+f_3(\kappa))-2h\alpha(1+f_2(\kappa))+f_1(\kappa)\sigma^2)\|\nabla_{S_0}X\|^2.$$
   Following similar arguments as in the proof of Theorem~\ref{theo: GEE non-expansive for negative rho}, we obtain the condition
   $$
   h\le\frac{2\alpha}{1+\|X|_{y(s)}\|^2|\rho|\sigma^2}\,\frac{1+f_2(\kappa)}{1+f_3(\kappa)}= \frac{2\alpha\kappa\coth\kappa}{1+\|X|_{y(s)}\|^2|\rho|\sigma^2}.
   $$
   Replacing $\kappa=h \|X|_{y(s)}\| \sqrt{|\rho|}$ and dividing by $h$ on both sides, we get
   $$
   \coth\kappa \geq \frac{1+\|X|_{y(s)} \|^2 |\rho| \sigma^2}{2\alpha\| X|_{y(s)} \| \sqrt{|\rho|}} \ge 1
   $$
where the last inequality relies on the fact that \added{$\sigma\geq \| \nabla X|^{-1} \| \ge 1 / \| \nabla X \| \ge \alpha$ (see Remark \ref{remark: norm of nabla X})}. We can therefore take the inverse of $\coth\kappa$, which is a decreasing function of $\kappa$, and then solve for $h$ to obtain the stated result.
\end{proof}

\section{Examples} \label{sec:examples}
In this section we demonstrate the presented theory for three different examples. The choices made in the experiments need some explanation.
Common to all three examples is that the chosen vector field is a sum of a simple Killing vector field and a constant $\epsilon$ times a gradient vector field.
Killing vector fields are not cocoercive, hence we always choose $\epsilon\neq 0$, and by convention $\epsilon>0$.
In the formulas derived and the numerical tests, we consider the infinitesimal case where the points $x_0$ and $y_0$ are close together so that the geodesically convex set $\mathcal{U}$ can be chosen arbitrarily small, and we consider the conditions on the vector field only at one point, say $x_0$, but the direction of the geodesic between $x_0$ and a nearby point $y_0$ can have any tangent direction.

\subsection{An example on $S^2$ }
Let $\mathcal{M}=S^2$, which has sectional curvature $\rho=1$. In spherical coordinates $(\phi, \theta)$, we consider the vector field with real parameter $\epsilon$
\begin{equation}
    X = \epsilon \cos(\phi) \partial_{\phi} + \partial_{\theta},
\end{equation}
where $\phi\in[-\frac{\pi}{2}, \frac{\pi}{2}]$ is the elevation with respect to the $xy$-plane, and $\theta\in[0,2\pi)$ is the azimuth. Then one finds that the metric is $\dd\phi^2 + \cos^2(\phi) \,\dd\theta^2$, and the non-zero Christoffel symbols $\Gamma^k_{ij}$ are
\begin{equation*}
    \Gamma^1_{2,2}=\sin(\phi)\cos(\phi),\qquad  \Gamma^2_{2,1} = \Gamma^2_{1,2} =-\tan(\phi).
\end{equation*}
We compute the $2\times 2$ connection matrix $\nabla X$ in coordinates,  
\begin{equation*}
    \nabla X = 
    \begin{bmatrix}
        -\epsilon\sin(\phi) &  \cos(\phi)\sin(\phi)    \\
        -\tan (\phi) &      -\epsilon\sin(\phi) 
    \end{bmatrix}.
\end{equation*}
The condition for $X$ to generate a non-expansive flow is that the logarithmic $g$-norm  of $\nabla X$ is non-positive \cite{arnold24bso, dekker1984, söderlind2024logarithmic}. This quantity, defined in (\ref{eq: logarithmic g-norm}), is found to be
\begin{equation*}
    \mu_g(\nabla X) = -\epsilon\sin(\phi)   
\end{equation*}
which makes $X$ contractive on the upper hemisphere if $\epsilon>0$, and on the lower hemisphere if $\epsilon < 0$. A basis for $T_p S^2$, $p\in\mathcal{M}$, is
\begin{align*}
    e_1 &= \frac{X|_p}{\|X|_p \|} = \frac{1}{\sqrt{1+\epsilon^2}} \left(\epsilon \, \partial_{\phi} + \frac{1}{\cos(\phi)} \partial_{\theta} \right), \\
    e_2 &= \frac{1}{\sqrt{1+\epsilon^2}} \left(- \partial_{\phi} + \frac{\epsilon}{\cos(\phi)} \partial_{\theta} \right),
\end{align*}
and we can write any $v_p\in T_p S^2$ as $v_p = \xi e_1 + \eta e_2$, $\xi, \eta \in \mathbb{R}$. As we are only interested in the direction given by $v_p$, we consider unit norm vectors. For $p$ in the upper hemisphere one can explicitly compute the cocoercivity constant $\alpha$ in (\ref{eq:cocoercivity-condition})
\begin{equation*}
    \alpha \le \frac{\epsilon}{(1+\epsilon^2)\sin(\phi)},
\end{equation*}
and $\mu_+$ in (\ref{eq: 3rd monotonicty conditon positive rho})
\begin{equation*}
    \mu_+ \ge \left(1 + \frac{\sqrt{1+\epsilon^2}}{2\epsilon\left(1+\epsilon^2+\epsilon\sqrt{1+\epsilon^2}\right)} \right) \frac{\epsilon}{(1+\epsilon^2)\sin(\phi)}.
\end{equation*}
Replacing the above constants and $\kappa = h \sqrt{1+\epsilon^2}\cos(\phi)$ in condition \eqref{eq: condition on h for negative rho}, one finds the smallest step size such that the inequality is violated.

\paragraph{\bf Numerical results.}
Figure \ref{fig: testS2_different_epsilon} shows some numerical results for our test case on $S^2$. The plots show the bound on the stepsize computed numerically (the smallest $h$ such that $\|S^s(1)\|-\|S^s(0)\| > 0$), versus the one given by (\ref{eq: condition on h for positive rho}), for different initial elevation angles $\phi_0$ and different values of the parameter $\epsilon$. For this test case, Theorem \ref{theo: GEE non-expansive for positive rho} gives a condition on the stepsize which guarantees the non-expansivity of the (\ref{eq: GEE}) method. Such condition gets closer to the numerical one as the initial point gets further from the equator.
\begin{figure}
    \centering
    \includegraphics[width=0.32\linewidth]{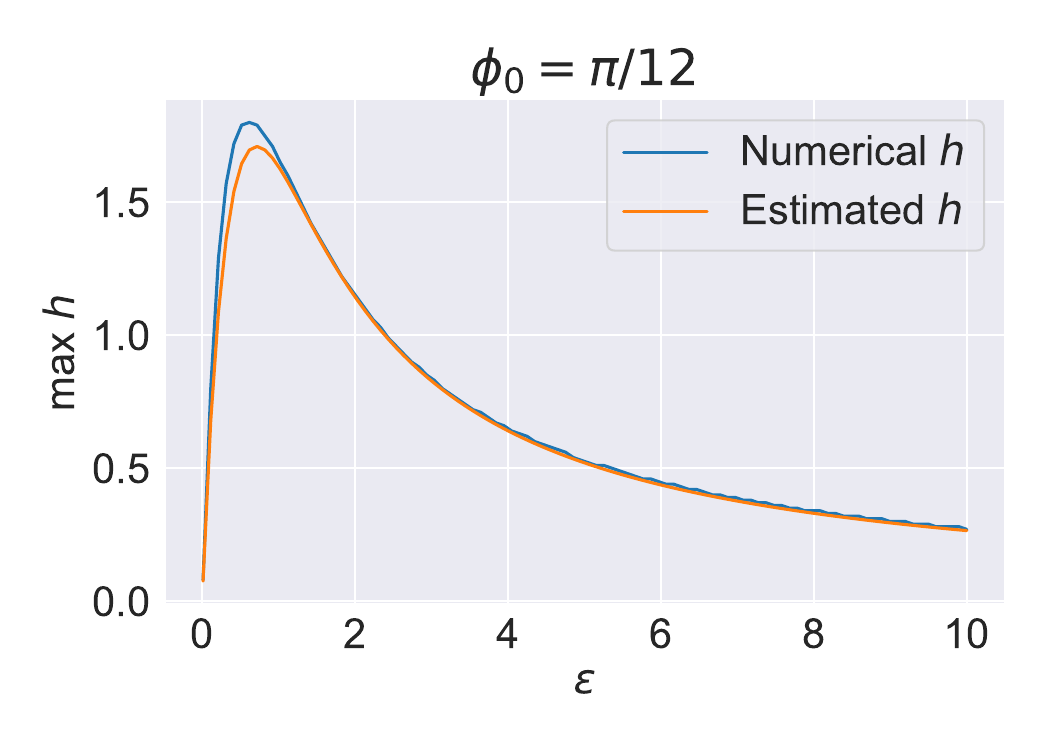}
    \includegraphics[width=0.32\linewidth]{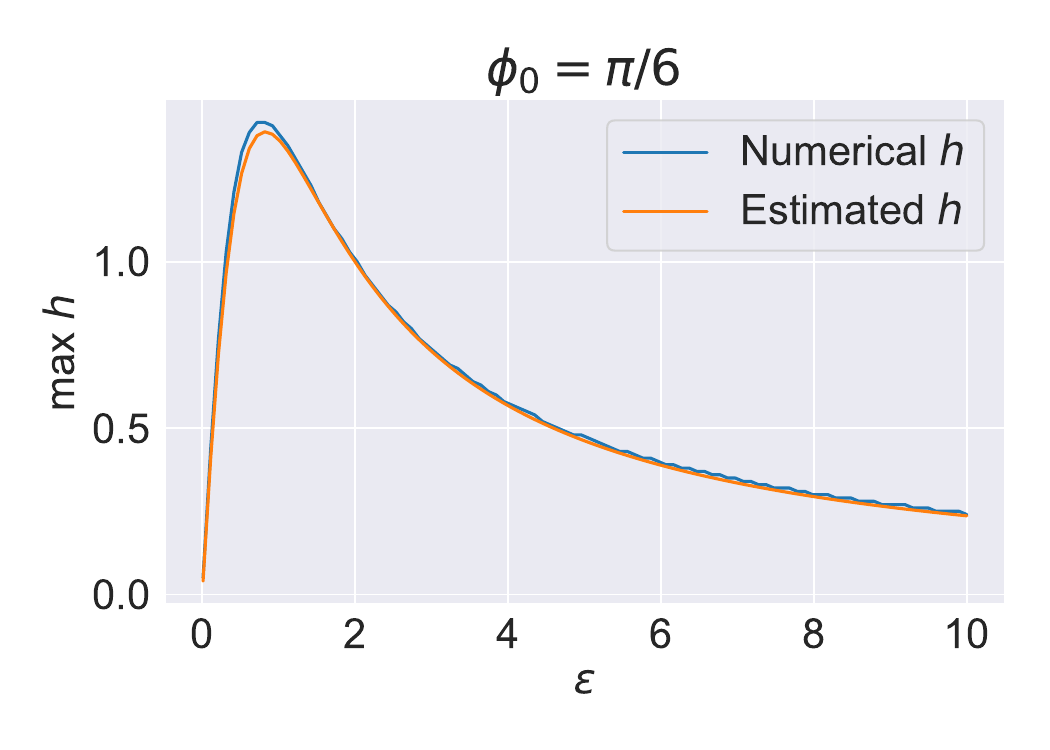}
    \includegraphics[width=0.32\linewidth]{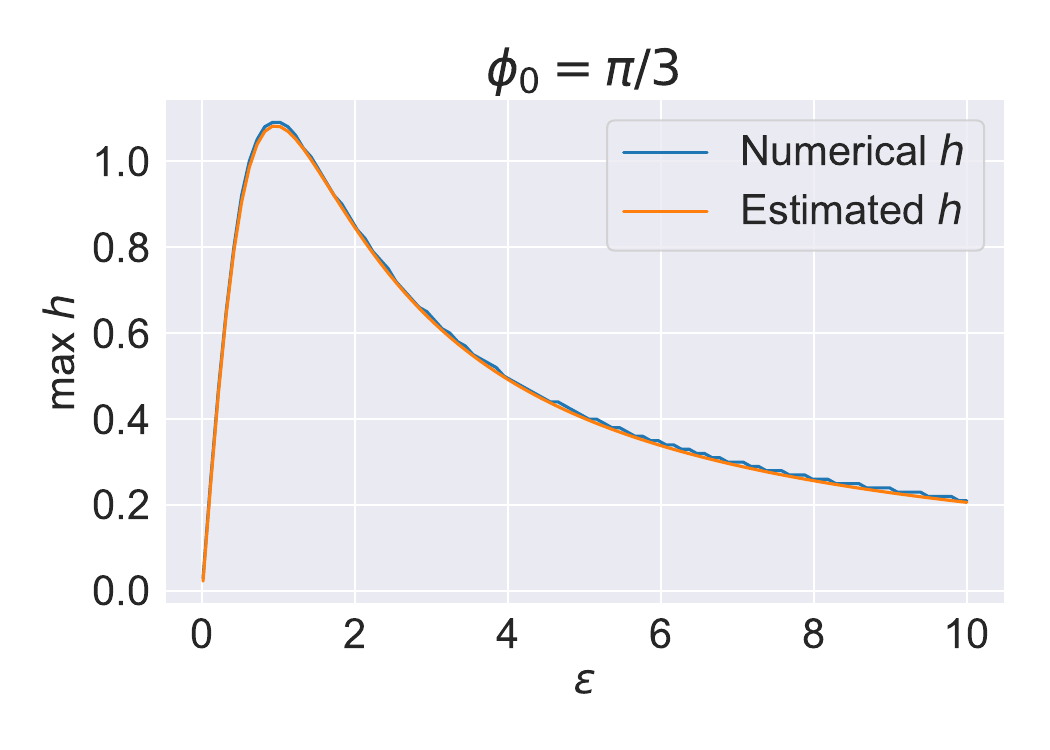}
    \caption{\small The plots show the largest stepsize $h$ found numerically versus the one given by condition \eqref{eq: condition on h for positive rho} in Theorem \ref{theo: GEE non-expansive for positive rho}, for different initial elevation angles $\phi_0$ and different values of the parameter $\epsilon$.}
    \label{fig: testS2_different_epsilon}
\end{figure}

\subsection{An example on $\mathbb{H}^2$}
Let $\mathcal{M}$ be the hyperbolic half plane $\mathbb{H}^2 = \{ (x,y)\in\mathbb{R}^2 \, : \, y > 0 \}$, which has sectional curvature $\rho=-1$. In cartesian coordinates, we consider the vector field with real parameter $\epsilon$
\begin{equation}
    X = \partial_{x} + \epsilon \partial_{y}.
\end{equation}
The metric is found to be $\frac{1}{y^2} \,\dd x^2 + \frac{1}{y^2} \, \dd y^2$, and the non-zero Christoffel symbols $\Gamma^k_{ij}$ are
\begin{equation*}
    \Gamma^1_{1,2}=\Gamma^1_{2,1}= -\frac{1}{y},\qquad  \Gamma^2_{1,1} = \frac{1}{y} = - \Gamma^2_{2,2}.
\end{equation*}
The $2\times 2$ connection matrix $\nabla X$ and its inverse in coordinates read  
\begin{equation*}
    \nabla X = \frac{1}{y}
    \begin{bmatrix}
        -\epsilon &  -1    \\
        1 &      -\epsilon
    \end{bmatrix}, \qquad \nabla X^{-1} = \frac{y}{\epsilon^2+1}
    \begin{bmatrix}
        -\epsilon &  1    \\
        -1 &      -\epsilon
    \end{bmatrix}.
\end{equation*}
Again, the flow of $X$ is non-expansive if the logarithmic $g$-norm  of $\nabla X$ is non-positive \cite{arnold24bso, dekker1984, söderlind2024logarithmic}. According to (\ref{eq: logarithmic g-norm}), we find
\begin{equation*}
    \mu_g(\nabla X) = -\epsilon,  
\end{equation*}
which makes $X$ contractive whenever $\epsilon>0$. A basis for $T_p \mathbb{H}^2$, $p=(x,y)\in\mathcal{M}$, is
\begin{align*}
    e_1 &= \frac{X|_p}{\|X|_p \|} = \frac{y}{\sqrt{1+\epsilon^2}} \big( \partial_{x} + \epsilon \partial_{y} \big), \\
    e_2 &= \frac{y}{\sqrt{1+\epsilon^2}} \big( \epsilon \partial_{x} - \partial_{y} \big),
\end{align*}
and we can write any $v_p\in T_p \mathbb{H}^2$ as $v_p = \xi e_1 + \eta e_2$, $\xi, \eta \in \mathbb{R}$. It suffices to consider only unit norm vectors $v_p$. We explicitly compute the cocoercivity constant $\alpha$ in (\ref{eq:cocoercivity-condition})
\begin{equation*}
    \alpha \le \frac{\epsilon y}{(1+\epsilon^2)}
\end{equation*}
the uniform bound on $\|\nabla X^{-1}\|$
\begin{equation*}
    \sigma \ge \frac{y}{\sqrt{1+\epsilon^2}}
\end{equation*}
and $\mu_-$ in (\ref{eq: 3rd monotonicty conditon negative rho}) 
\begin{equation*}
    \mu_- \ge \left(\frac{\sqrt{1+\epsilon^2}}{2\epsilon} + \frac{1}{2} \right) \frac{\epsilon y}{(1+\epsilon^2)}.
\end{equation*}
Replacing the above constants and $\kappa = h \frac{\sqrt{1+\epsilon^2}}{y}$ in condition \eqref{eq: condition on h for negative rho} one can numerically solve for $h$ to find the smallest positive step size for which the inequality is violated.

\paragraph{\bf Numerical results.}
Figure \ref{fig: testH2_different_epsilon} shows some numerical results for our test case on $\mathbb{H}^2$. The plots show the bound on the stepsize computed numerically (the smallest $h$ such that $\|S^s(1)\|-\|S^s(0)\| >  0$), versus the one obtained by solving numerically condition \eqref{eq: condition on h for negative rho}, for different initial $y_0$ and different values of the parameter $\epsilon$. For this test case, Theorem \ref{theo: GEE non-expansive for negative rho} gives a condition on the step size which guarantees the non-expansivity of the (\ref{eq: GEE}) method.
\begin{figure}
    \centering
    \includegraphics[width=0.32\linewidth]{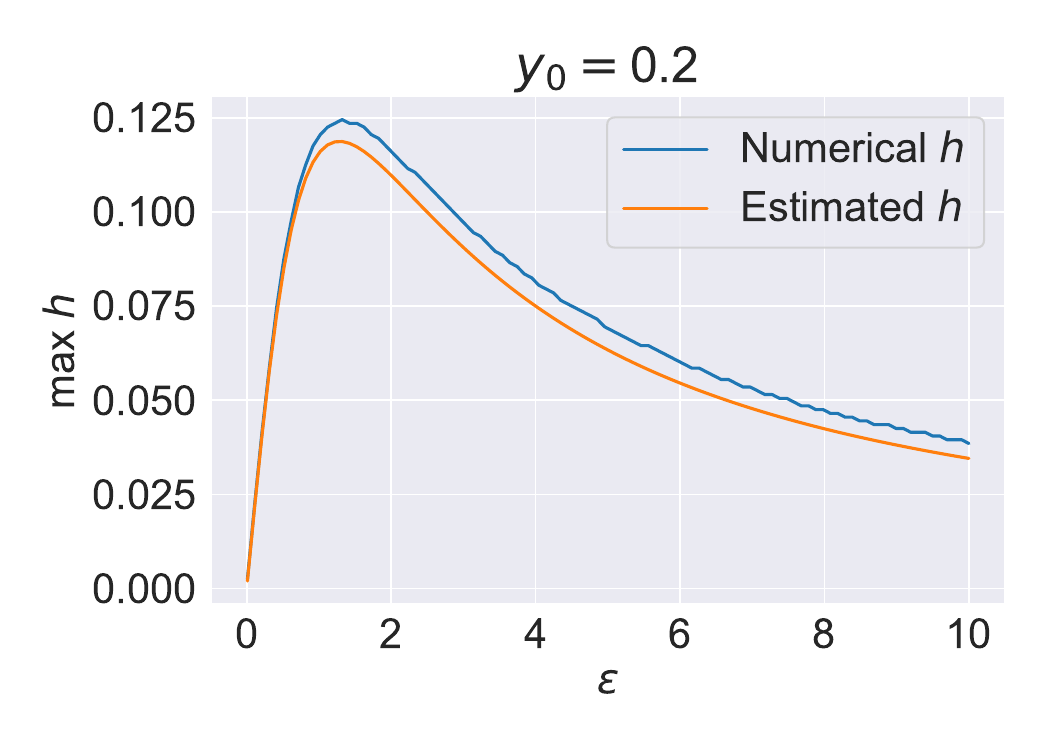}
    \includegraphics[width=0.32\linewidth]{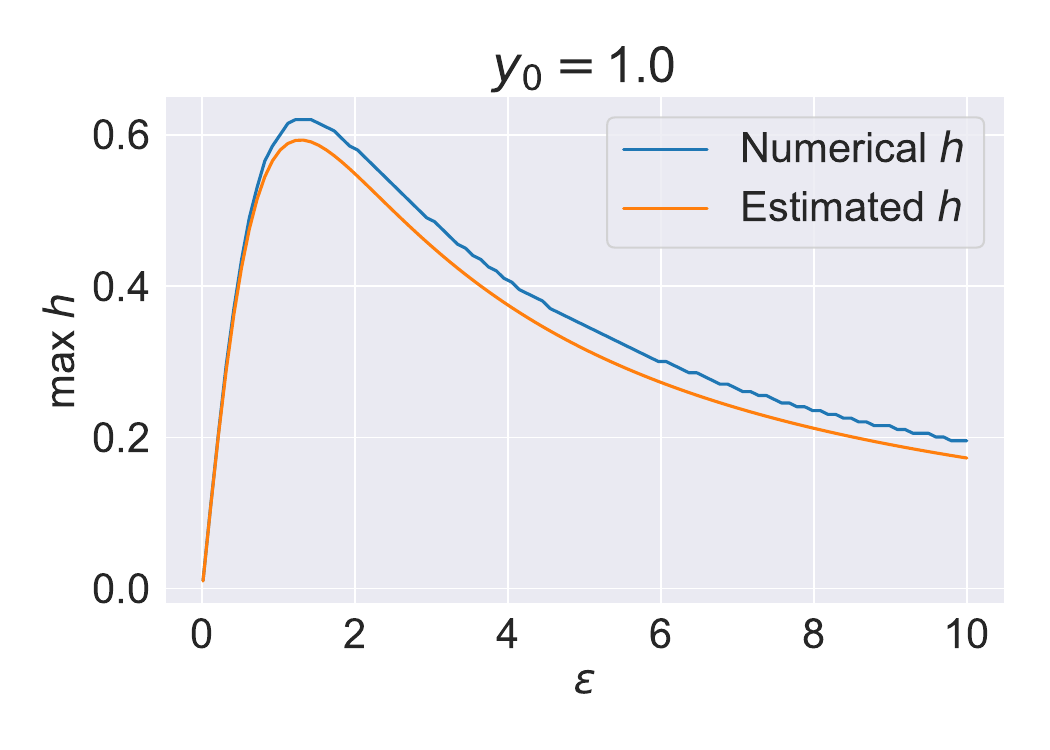}
    \includegraphics[width=0.32\linewidth]{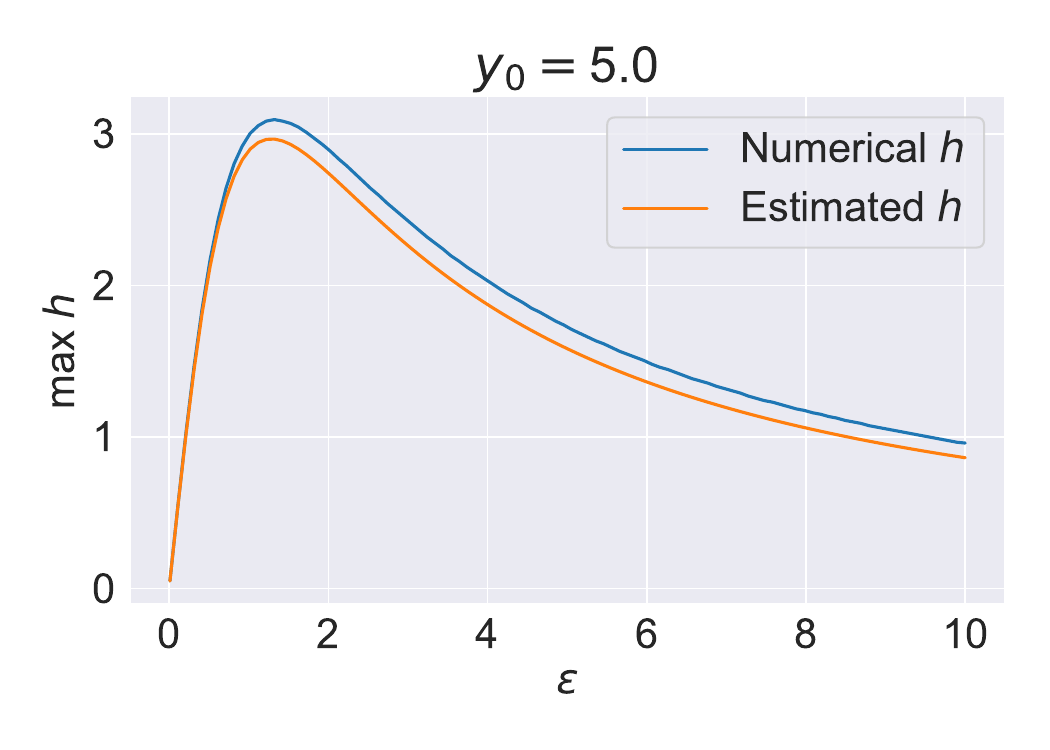}
    \caption{\small The plots show the largest stepsize $h$ found numerically versus the estimated one given by solving numerically condition \eqref{eq: condition on h for negative rho} in Theorem \ref{theo: GEE non-expansive for negative rho}, for different initial $y_0$ and different values of the parameter $\epsilon$.}
    \label{fig: testH2_different_epsilon}
\end{figure}

In order to illustrate Proposition~\ref{prop: singular nablaX}, we consider the vector field $X=y\partial_y$ on $\mathbb{H}^2$.
The basis vectors are $e_1=X=y\partial_y$, $e_2=-y\partial_x$.
We get the singular connection matrix
$$
\nabla X = \left[
\begin{array}{cc}
  -1   &  0 \\
  0   &  0
\end{array}
\right],
$$
 With $S_0=\xi e_1 + \eta e_2$ we get
$$
\langle\nabla_{S_0} X, S_0\rangle = -\eta^2,\quad \|\nabla_{S_0} X \|^2 = \eta^2,
$$
so we can take the cocoercivity constant to be $\alpha=1$. Then $\mathcal{V}_p=\text{Range}(\nabla X)=\mbox{span}(e_2)$, and we obtain
$\sigma=1$ as well as $C=\sqrt{\langle X, X\rangle}=1$ and $\kappa=h$. Proposition~\ref{prop: singular nablaX} now tells us that 
$d(x_1,y_1)\leq d(x_0,y_0)$ if
$$
h \leq  h\coth(h)
$$
which holds for any $h>0$.

\subsection{An example on $S^3$}
Let $\mathcal{M}=S^3$, which has sectional curvature $\rho=1$. In spherical coordinates $(\psi,\theta,\varphi)$, we consider the vector field with real parameter $\epsilon$
\begin{equation}
    X = - \epsilon \sin(\psi) \partial_{\psi} + \partial_{\varphi},
\end{equation}
where $\psi, \theta, \in[0, \pi]$ and $\varphi\in[0,2\pi)$. The round metric is $\dd\psi^2 + \sin^2(\psi) \,\dd\theta^2 + \sin^2(\psi) \sin^2(\theta) \,\dd\varphi^2$, and the non-zero Christoffel symbols $\Gamma^k_{ij}$ are
\begin{align*}
    \Gamma^1_{2,2}&=-\cos(\psi) \sin(\psi), 
    & \Gamma^1_{3,3} &= \cos(\psi) \sin(\psi) \sin^2(\theta), \\
    \Gamma^2_{1,2}&=\Gamma^2_{2,1}= \cot(\psi), 
    & \Gamma^2_{3,3} &= \cos(\theta) \sin(\theta), \\
     \Gamma^3_{1,3}&=\Gamma^2_{3,1}= \cot(\psi), 
    & \Gamma^3_{2,3} &=\Gamma^3_{3,2}= \cot(\theta)
\end{align*}
We compute the $3\times 3$ connection matrix $\nabla X$ in coordinates,  
\begin{equation*}
    \nabla X = 
    \begin{bmatrix}
        -\epsilon\cos(\psi) & 0 &  -\cos(\psi)\sin(\psi)\sin^2(\theta)    \\
        0 & -\epsilon\cos(\psi) & -\cos(\theta)\sin(\theta) \\
        \cot(\psi) & \cot(\theta) & -\epsilon\cos(\psi)
    \end{bmatrix}.
\end{equation*}
The condition for $X$ to generate a non-expansive flow is that the logarithmic $g$-norm  of $\nabla X$ is non-positive \cite{arnold24bso, dekker1984, söderlind2024logarithmic}. This quantity, defined in (\ref{eq: logarithmic g-norm}), is found to be
\begin{equation*}
    \mu_g(\nabla X) = -\epsilon\cos(\psi)   
\end{equation*}
which makes $X$ contractive if $\epsilon>0$ when $\psi\in(0,\frac{\pi}{2})$, and if $\epsilon<0$ when $\psi\in(\frac{\pi}{2},\pi)$. A basis for $T_p S^3$, $p\in\mathcal{M}$, is
\begin{align*}
    e_1 &= \frac{X|_p}{\|X|_p \|} = \frac{1}{\sqrt{\epsilon^2+\sin^2(\theta)}} \left(\epsilon \, \partial_{\psi} + \frac{1}{\sin(\theta)} \partial_{\varphi} \right), \\
    e_2 &= \frac{1}{\sin(\psi)} \partial_{\theta} \\
    e_2 &= \frac{1}{\sqrt{\epsilon^2+\sin^2(\theta)}} \left(\sin(\theta) \, \partial_{\psi} + \frac{\epsilon}{\sin(\psi)\sin(\theta)} \partial_{\varphi} \right),
\end{align*}
and we can write any $v_p\in T_p S^3$ as $v_p = \xi e_1 + \eta e_2 + \zeta e_3$, $\xi, \eta, \zeta \in \mathbb{R}$. As we are only interested in the direction given by $v_p$, we consider unit norm vectors. For $p$ so that $\psi\in(0,\frac{\pi}{2})$, one can explicitly compute the cocoercivity constant $\alpha$ in (\ref{eq:cocoercivity-condition})
\begin{equation*}
    \alpha \le \frac{\epsilon\cos(\psi)}{\cos^2(\psi)(\epsilon^2+\sin^2(\theta)) + \cos^2(\theta)},
\end{equation*}
and $\mu_+$ in (\ref{eq: 3rd monotonicty conditon positive rho}) is computed numerically using the formula
\begin{equation*}
    \mu_+ =\lambda_{\max}\left(\frac{M+M^T}{2}\right),\quad M=-g^{1/2}(I-P_{X})\nabla X^{-1}g^{-1/2}
    \end{equation*}
    and $\lambda_{\max}(A)$ denotes the maximum eigenvalue $A$.
Replacing the above constants and $\kappa = h \sin(\psi) \sqrt{\epsilon^2+\sin^2(\theta)}$ in condition \eqref{eq: condition on h for negative rho}, one finds the smallest positive step size such that the inequality is violated.

\paragraph{\bf Numerical results.}
Figure \ref{fig: testS3_different_epsilon} shows some numerical results for our test case on $S^3$. The plots show the bound on the stepsize computed numerically (the smallest $h$ such that $\|S^s(1)\|-\|S^s(0)\| > 0$), versus the one given by (\ref{eq: condition on h for positive rho}), for different initial angles $\psi_0$ and $\theta_0$, and different values of the parameter $\epsilon$. For this test case, Theorem \ref{theo: GEE non-expansive for positive rho} gives a condition on the stepsize which guarantees the non-expansivity of the (\ref{eq: GEE}) method.
\begin{figure}
    \centering
    \includegraphics[width=0.32\linewidth]{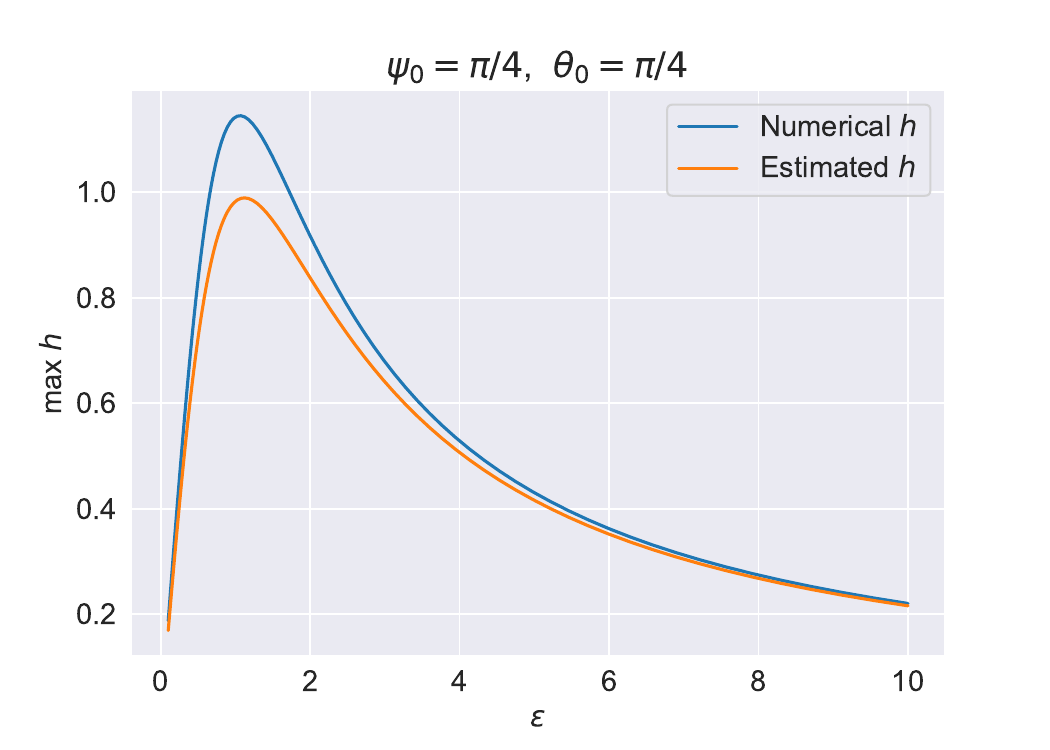}
    \includegraphics[width=0.32\linewidth]{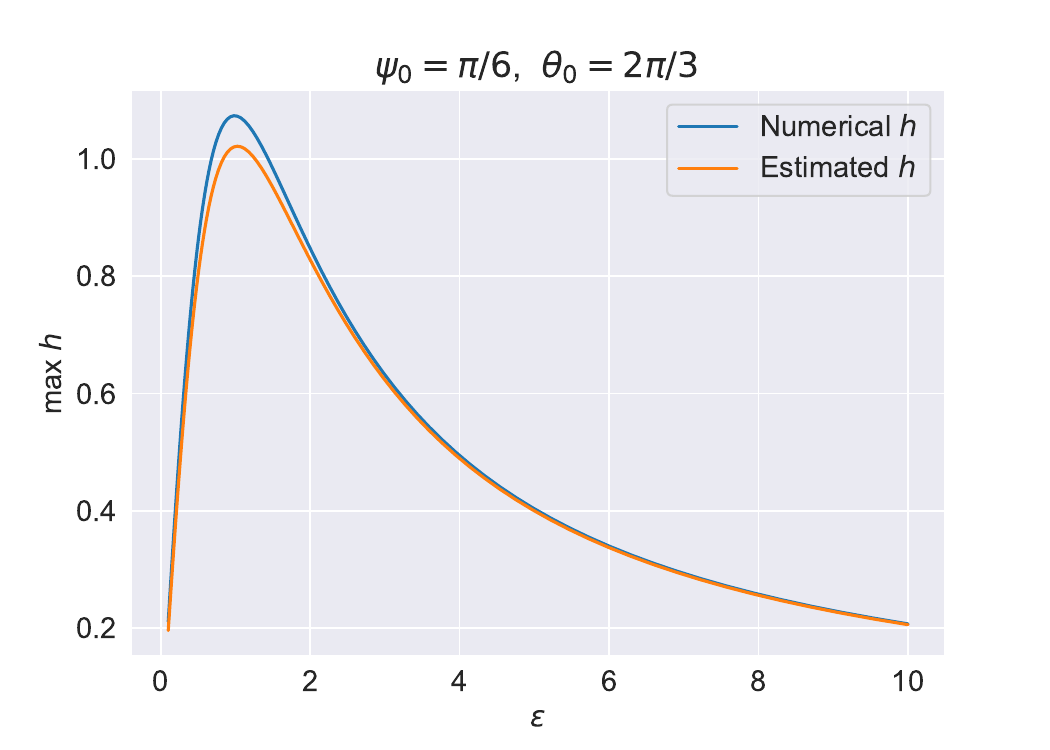}
    \includegraphics[width=0.32\linewidth]{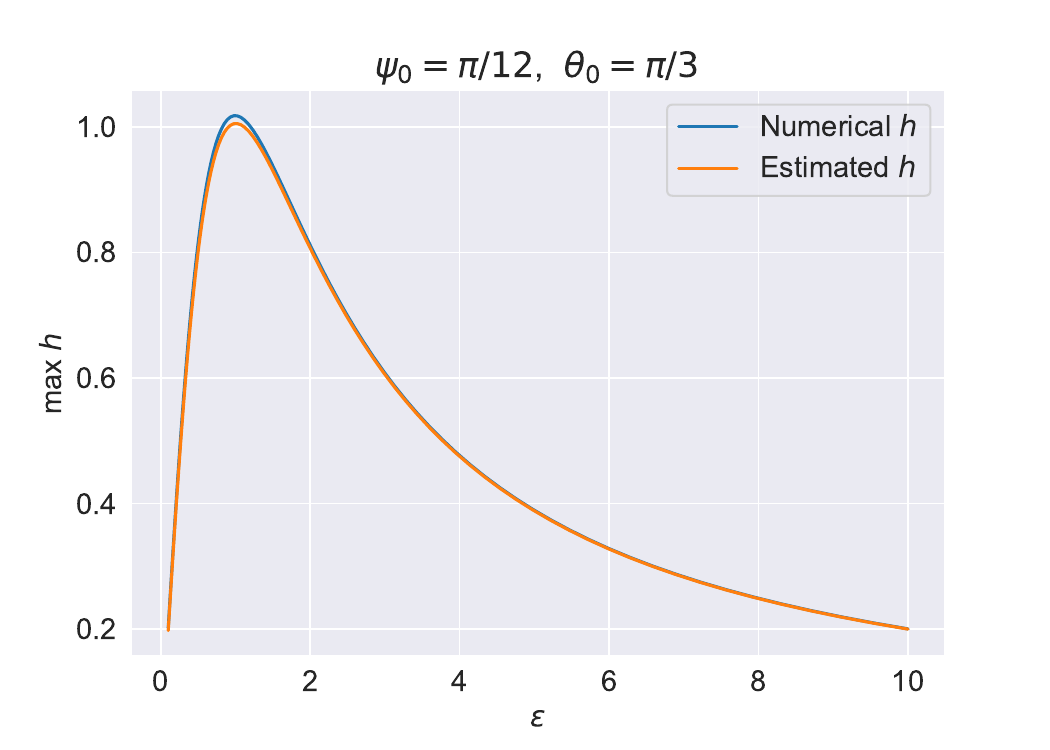}
    \caption{\small The plots show the largest stepsize $h$ found numerically versus the one given by condition \eqref{eq: condition on h for positive rho} in Theorem \ref{theo: GEE non-expansive for positive rho}, for different initial initial angles $\psi_0$ and $\theta_0$, and different values of the parameter $\epsilon$.}
    \label{fig: testS3_different_epsilon}
\end{figure}
 
 \section{Conclusion and future work}
 We have presented a framework for analyzing conditional stability of numerical integrators on Riemannian manifolds.
 The results are obtained for cocoercive ODE vector fields, and we are not aware of results of this kind in the literature that is not assuming a strong monotonicity condition on the vector field.
 
 In this paper we have chosen to focus on the simplest natural numerical integrator on a Riemannian manifold, namely the Geodesic Euler Method.
 The presented work is applicable to manifolds of constant sectional curvature where precise results have been possible to obtain.
 One may however consider these manifolds as model spaces for more general types of Riemannian manifolds, but further investigations are necessary in order to conclude whether the obtained results are representative, e.g. locally, also for the case of non-constant sectional curvatures.

We have seen that stability, or more precisely non-expansivity of numerical schemes on general Riemannian manifold is fundamentally different from that in Euclidean space in some ways. A crucial parameter is what we have denoted $\kappa=h\|X\|\sqrt{|\rho|}$ which is a measure for the distance traveled over a step by the Euler method. This parameter does not affect per se the stability behavior in Euclidean space, but it does add a limitation to the step size in the Riemannian case. The effect is less severe in positively than in negatively curved spaces.
 
 It would be of interest to generalize the results to other types of numerical integrators than the GEE method. It gives some technical advantages to consider methods which are intrinsic, i.e. based on natural operations such as geodesics and parallel transport. But there are also good reasons to look at other types of integrators. One is that geodesics and parallel transport may be challenging to compute in practice for many manifolds, another is that there is a rich variety of other types of integrators available for various types of differentiable manifolds, such as Lie group integrators and retraction based integrators.

\section*{Acknowledgements}
This work is supported by the Horizon Europe, MSCA-SE project 101131557 (REMODEL). A large part of the work was done while the authors were hosted by partners of the REMODEL project. We would like to thank our hosts Takaharu Yaguchi (Kobe University), Lars Ruthotto (Emory University) and Carola Sch\"{o}nlieb (Cambridge University) for accommodating us and for engaging in fruitful discussions. The authors would also like to acknowledge PhD students Nicky van den Berg and Gijs Bellaard from TU Eindhoven for inspiring discussions about this work during our overlapping stay at Emory University in the autumn of 2024.

\bibliographystyle{plain}
\bibliography{GEE}

\appendix

\section{Remarks about the cocoercivity condition}

\subsection{An existence result}
We first give a result which shows under which circumstances the constant $\alpha$ in \eqref{eq:cocoercivity-condition} can be defined.
\begin{prop}\label{prop:cocoexist}
Let $(M,g)$ be a Riemannian manifold and $X$ a smooth vector field with a bounded $\nabla X$ on $\mathcal U\subset M$. 
Suppose that for every $p\in \mathcal{U}$, either
\begin{itemize}
\item $\nabla X|_p$ is invertible with a bounded inverse on $\mathcal{U}$, or
\item $\nabla X|_p$ is singular and $\text{Range}(\nabla X|_p)=\text{Range}(\nabla X|_p^*)$ where $\nabla X|_p^*$ is the adjoint of $\nabla X|_p$.
The restriction of $\nabla X|_p$ to $\text{Range}(\nabla X|_p^*)$ has a bounded inverse.
\end{itemize}
Then $\alpha\in\mathbb{R}$ exists such that
\begin{equation} \label{eq: alphadef}
\langle v_p, \nabla_{v_p}X\rangle \leq -\alpha_p \|\nabla_{v_p} X\|^2,\quad \alpha=\inf_{p\in\mathcal{U}} \alpha_p.
\end{equation}
\end{prop}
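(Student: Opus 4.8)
The plan is to localize: fix $p\in\mathcal{U}$, write $A=\nabla X|_p$, and first produce a finite $\alpha_p$ for which $\langle v_p, Av_p\rangle \le -\alpha_p \|Av_p\|^2$ holds for all $v_p\in T_p\mathcal{M}$. Since the inequality is automatic when $Av_p=0$ (both sides vanish), the set of admissible $\alpha_p$ is exactly the half-line $\alpha_p \le -\sup_{Av_p\ne 0} \langle v_p, Av_p\rangle / \|Av_p\|^2$, so the whole question reduces to showing this supremum is finite, and then taking $\alpha_p$ equal to its negative. I would phrase the resulting quantity through the logarithmic $g$-norm of the inverse, matching \eqref{eq: logarithmic g-norm}, so that the two hypotheses are seen to control precisely the object that appears.

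In the invertible case I substitute $w=Av_p$. Then $v_p=A^{-1}w$ ranges over all of $T_p\mathcal{M}$ as $w$ does, and the quotient becomes $\langle A^{-1}w, w\rangle / \|w\|^2$, whose supremum is $\mu_g(A^{-1})$. By Cauchy--Schwarz this is bounded above by $\|A^{-1}\|\le\sigma$, where $\sigma$ is the uniform bound on the inverse over $\mathcal{U}$ (the natural reading of ``bounded inverse on $\mathcal{U}$''). Hence $\alpha_p=-\mu_g(A^{-1})$ is finite with $\alpha_p\ge -\sigma$.

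The singular case is where the real content lies, and it is handled by reducing to the invertible case on an $A$-invariant subspace. Setting $W=\text{Range}(A^*)=(\ker A)^\perp$, the hypothesis $\text{Range}(A)=\text{Range}(A^*)$ forces $A(W)=A(T_p\mathcal{M})=\text{Range}(A)=W$, so $A$ restricts to a bijection of $W$ which, by assumption, has bounded inverse. I then decompose $v_p=v_0+v_1$ with $v_0\in\ker A$ and $v_1\in W$. The key observation is that $Av_p=Av_1\in\text{Range}(A)=W$, while $v_0\in\ker A=W^\perp$, so $\langle v_0, Av_1\rangle=0$ and therefore $\langle v_p, Av_p\rangle=\langle v_1, Av_1\rangle$, with $\|Av_p\|^2=\|Av_1\|^2$. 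The quotient thus depends only on the $W$-component, and applying the invertible-case argument to $A|_W:W\to W$ gives $\alpha_p\ge -\sigma$ once more. This orthogonality step, which is exactly what the range condition $\text{Range}(A)=\text{Range}(A^*)$ is there to guarantee, is the part I expect to require the most care, since without it the kernel component would not drop out of $\langle v_p, Av_p\rangle$ and the quotient could be unbounded.

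Finally, I collect the two cases: in both, $\alpha_p$ is finite and satisfies the uniform lower bound $\alpha_p\ge -\sigma$ coming from the common bound on the inverses. Consequently $\alpha=\inf_{p\in\mathcal{U}}\alpha_p\ge -\sigma>-\infty$, while $\mathcal{U}\ne\emptyset$ gives $\alpha\le \alpha_{p_0}<\infty$ for any fixed $p_0$. Hence $\alpha\in\mathbb{R}$ and \eqref{eq: alphadef} holds, which is the claim.
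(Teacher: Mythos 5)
Your proposal is correct and follows essentially the same route as the paper: Cauchy--Schwarz applied to $\langle A^{-1}w,w\rangle$ in the invertible case, and in the singular case the decomposition of $v_p$ into a kernel part and a part in $\mathrm{Range}(A^*)$, with the range condition forcing the cross term to vanish so that the argument reduces to the invertible case on that subspace. Your version is if anything slightly more streamlined, since you use the range hypothesis directly to establish the orthogonality rather than first deriving it as a necessary condition as the paper does.
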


\begin{proof}
In the first case, we set $v_p=(\nabla X|_p)^{-1}w_p$ and apply the Cauchy-Schwarz inequality to obtain
$$
\langle \nabla_{v_p} X, v_p\rangle = \langle w_p, (\nabla X|_p)^{-1}w_p\rangle \leq \|(\nabla X|_p)^{-1}\|\,\|w_p\|^2
= \|(\nabla X|_p)^{-1}\|\, \|\nabla_{v_p} X|\|^2
$$
so $-\sup_{p\in\mathcal{U}}\|(\nabla X|_p)^{-1}\|$ can be taken as a lower bound for $\alpha_p$.

In the second case, we choose $v_p = r_p + \beta n_p$ where $r_p\in \text{Range}(\nabla X|_p^*)$, $\beta\in\mathbb{R}$ and $0\neq n_p\in \ker (\nabla X|_p)$.
Then, the condition \eqref{eq: alphadef} reads
$$
\langle \nabla_{r_p} X, r_p\rangle + \beta\langle \nabla_{r_p} X, n_p\rangle \leq -\alpha\|\nabla_{r_p}X\|^2
$$
where $\beta$ is arbitrary. Therefore, it is necessary that $\langle \nabla_{r_p} X, n_p\rangle=0$ 
which is true for all choices of $r_p$ and $n_p$ if and only if $\nabla_{r_p}X\in \text{Range}(\nabla X|^*_p)$. By definition, the set of all $\nabla_{r_p} X|_p$ is precisely $\text{Range}(\nabla X|_p)$, so we conclude that one must have $\text{Range}(\nabla X|_p)=\text{Range}(\nabla X|_p^*)$.
The proof is completed in the same way as in the non-singular case, by restricting $\nabla X|_p$ to $\text{Range}(\nabla X|_p^*)$.
\end{proof}
Note that this proposition does not ensure that the vector field $X$ is cocoercive since it might happen (as in the bound of the proof) that $\alpha<0$.

\subsection{A coordinate formula for the cocoercivity constant.}
We give a concrete formula for the cocoercivity constant when it exists. Suppose we have introduced local coordinates $x_1,\ldots,x_d$.
Now $g$ is to be interpreted as a $d\times d$ matrix with elements $g_{ij}=\langle \partial x_i, \partial x_j\rangle$.
The matrix of the operator $\nabla X|_p$ is denoted $\mathcal{A}_{X,p}$. We assume that we can compute its reduced singular value decomposition
$\mathcal{A}_{X,p}=U\Sigma V^T$ where $U, V\in\mathbb{R}^{d\times r}$ have orthonormal columns and $\Sigma\in\mathbb{R}^{r\times r}$ is diagonal and positive definite and where $r$ is the rank of $\mathcal{A}_{X,p}$.
Thanks to Proposition~\ref{prop:cocoexist} we can write $U=VQ$ where $Q$ is orthogonal and $r\times r$, i.e. $Q=V^TU$.
We are interested in finding the largest possible $\alpha_p$ such that
$$
v_p^T g \mathcal{A}_{X,p}v_p \leq -\alpha_p v_p^T \mathcal{A}_{X,p}^T g \mathcal{A}_{X,p} v_p,\quad\forall v_p\in\mathbb{R}^d
$$
It is easy to show that $-\alpha_p$ can be chosen as the largest eigenvalue
$$
-\alpha_p = \max_{1\leq i \leq d} \lambda_i\left(\frac{M+M^T}{2}\right)
$$
with
$$
  M = \tilde{g}^{-\frac12}\ \Sigma^{-1}\,V^TU\,\tilde{g}^{\frac12},\quad \tilde{g} = U^T g U.
$$
To find $\alpha$ it suffices to maximize $\alpha_p$ over the domain $\mathcal{U}\subset M$.

Formulas for $\mu_{+}$ and $\mu_{-}$ in Theorems~\ref{theo: GEE non-expansive for positive rho} and \ref{theo: GEE non-expansive for negative rho} can be derived similarly. In the case of  $\nabla X$ non-singular, we obtain
\begin{equation*}
    \mu_+ =\lambda_{\max}\left(\frac{M_{+}+M_{+}^T}{2}\right),\qquad \mu_{-} =\lambda_{\max}\left(\frac{{M_{-}}+{M_{-}}^T}{2}\right),
    \end{equation*} 
    where $\lambda_{\max}(A)$ denotes the maximum eigenvalue $A$ and with
    \begin{equation*}
    M_{+}=-g^{1/2}(I-P_{X})\nabla X^{-1}g^{-1/2},\qquad M_{-}=-g^{1/2}P_{X}\nabla X^{-1}g^{-1/2}.
    \end{equation*}

\end{document}